\definecolor{darkblue}{rgb}{0,0,.7}
\newtheorem{theorem}{Theorem}
\newtheorem{proposition}[theorem]{Proposition}
\newtheorem{lemma}[theorem]{Lemma}
\newtheorem{corollary}[theorem]{Corollary}
\theoremstyle{definition}
\newtheorem{definition}[theorem]{Definition}
\newtheorem{remark}[theorem]{Remark}
\newtheorem{example}[theorem]{Example}
\newcommand{\N}{\mathbb{N}}
\title{Sparse variational regularization with oversmoothing penalty term in the scale of sequence spaces}
\author{Robert Plato\footnotemark[2] \and Bernd Hofmann\footnotemark[3]}
\newcommand{\mytau}{r}
\newcommand{\myr}{a}
\newcommand{\uhat}{\widehat{u}}
\newcommand{\Rppur}[1][p]{\mathcal{R}_{#1}}
\newcommand{\Rp}[2][p]{\mathcal{R}_{#1}(#2)}
\newcommand{\fdelta}{v^\delta}
\newcommand{\mygama}{\gamma_1}
\newcommand{\mygamb}{\gamma_2}
\newcommand{\rp}[1]{#1}
\newcommand{\rpb}[1]{#1}
\newcommand{\bh}[1]{#1}
\newcommand{\searchedsol}{sought-after solution\xspace}
\newcommand{\searchedsolb}{desired solution\xspace}
\newcommand{\myprime}{*}
\begin{document}
\maketitle

\renewcommand{\thefootnote}{\fnsymbol{footnote}}

\footnotetext[2]{Department of Mathematics, University of Siegen,
Walter-Flex-Str.~3, 57068 Siegen, Germany.}
\footnotetext[3]{Chemnitz University of Technology, Faculty of Mathematics,  09107 Chemnitz, Germany.}

\newcounter{enumcount}
\newcounter{enumcountroman}
\renewcommand{\theenumcount}{(\alph{enumcount})}
\bibliographystyle{plain}
\begin{abstract}
In this work, we consider a class of linear ill-posed problems with operators that map from the sequence space $ \ell_\mytau $ ($\mytau \ge  1 $) into a Banach space and in addition satisfy a conditional stability estimate in the scale of sequence spaces $ \ell_q, \, q \ge 0 $. For the regularization of such problems in the presence of deterministic noise, we consider variational regularization with a penalty  functional either of the form $ \Rppur =\Vert \cdot \Vert_p^p $ for some $ p > 0 $ or in form of  the counting measure $ \Rppur[0]=\Vert \cdot \Vert_0 $.
The latter case guarantees sparsity of the corresponding regularized solutions. In this framework, we present first stability and then convergence rates for suitable a priori parameter choices. The results cover the oversmoothing situation, where the \searchedsolb does not belong to the domain of definition of the considered penalty functional. The analysis of the oversmoothing case utilizes auxiliary elements that are defined by means of hard thresholding. Such technique can also be used for post processing to guarantee sparsity.
\end{abstract}

\vspace{3mm}
\noindent
%%{\large
{\normalsize
\textbf{In memory of our esteemed advisor and distinguished colleague A. K. Louis}
}
\section{Introduction}
\label{sec:intro}

In this paper, we consider linear ill-posed problems in the scale of real sequence spaces $\{\ell_s=\ell_s(\mathbb{N})\}_{0 \le s <\infty}$. As a preparation, we recall some basic details about those sequence spaces. In fact, we have
\begin{align*}
\ell_s = \{ u = (u_k)_{k=1,2,\ldots} \mid
\Vert u \Vert_s < \infty \},
\quad
\Vert u \Vert_s := \big(\sum_{k=1}^\infty \vert u_k \vert^s\big)^{1/s},
\qquad 0 <  s < \infty.
%%%\for u = (u_n)_{n=1,2,\ldots}.
\end{align*}
For $ s \ge 1 $, this defines a Banach space, and for $ 0 < s < 1 $ it gives a quasi-normed space, where a generalized triangle inequality holds. We also consider the case $ s = 0 $:
\begin{align*}
\ell_0 = \{ u = (u_k)_{k=1,2,\ldots} \mid \Vert u \Vert_0 < \infty \},
\quad \Vert u \Vert_0 := \# \{ k \in \mathbb{N} \mid u_k \neq 0 \},
%%\for u = (u_n)_{n=1,2,\ldots},
\end{align*}
where $ \# \mathcal{A} $ denotes the number of elements of a set $ \mathcal{A} $.
Thus, $ u \in \ell_0 $ means
$ u_n \neq 0 $ for finitely many $ k $ only, i.e.~$ u $ is sparse.
For further reference we note that
\begin{equation} \label{eq:monoton}
\|u\|_t \le \|u\|_s \quad (0<s \le t \rp{\ < \ } \infty,\;\; u \in \ell_s),
\end{equation}
and
\begin{align*}
%\label{eq:limpzer}
\lim_{p\to 0} \Vert u \Vert_p^p = \Vert u \Vert_0 \quad (u \in \ell_0).
\end{align*}
Below, we also make use of the interpolation inequality for sequence spaces.
\rp{As a preparation, we introduce the following functionals:
\begin{align}
\label{eq:rpdef}
\Rp{u}=\|u\|_p^p \quad (p>0), \qquad \Rp[0]{u}= \|u\|_0.
\end{align}
The interpolation inequality for sequence spaces then reads as follows:
%%\begin{proposition}[Interpolation inequality]
%%%\label{th:interpol_ineq}
for parameters $ 0 \le p < r < a < \infty $ and
$ \theta = \frac{\myr-r}{\myr-p} $ we have}
\begin{align}
\label{eq:interpol_ineq}
\Vert u \Vert_\mytau^\mytau
\le
\Rp{u}^\theta
\Vert u \Vert_\myr^{\myr(1-\theta)}
\quad (u \in \ell_p).
\end{align}
%
%
%\begin{align*}
%%\theta = \frac{\myr-r}{\myr-p}, \quad 1-\theta = \frac{r-p}{\myr-p}.
%%\end{align*}
%
%%\end{proposition}
%
This is well known for $ p > 0 $, cf.~e.g.,
\cite[Proposition 6.10]{Folland[99]} or \cite[Lemma 9]{Tao[09]},
and in fact follows easily by a suitable application of H\"older's inequality.
%%%~\cite[Section 4.2]{Brezis[11]} or
It can then easily be extended to the case $ p = 0 $
by letting $ p \to 0 $ in (\ref{eq:interpol_ineq}).
%
%
%%\begin{proof}
%%%We give a proof for convenience of the reader. For $ p > 0 $, this is easily obtained from the H\"older inequality
%%%$ \sum_{n=1}^{\infty} \vert x_n y_n \vert \le \Vert x \Vert_s \Vert y \Vert_t $, applied for parameters
%%$ s = \tfrac{p}{\theta r} $ and $ t = \tfrac{a}{(1-\theta) r} $ and the sequences
%%%$ s = \lfrac{p}{\theta r} $ and $ t = \lfrac{a}{(1-\theta) r} $ and sequences
%%%$ x_n = \vert u_n \vert^{\theta r} $ and $ y_n = \vert u_n \vert^{(1-\theta) r} $.
%%%The case $ p = 0 $ in (\ref{eq:interpol_ineq})
%%%then follows from (\ref{eq:interpol_ineq}) for $ p > 0 $ by letting $ p \to 0 $.
%%%\end{proof}
%
%%For special cases of the interpolation inequality (\ref{eq:interpol_ineq}), we refer to
%%%\cite[Proposition 4]{Miller_Hohage[21]}. More references ...

In this study, we also make use of the following Radon--Riesz property of $ \ell_p, \, 0 < p < \infty $, also known as Kadec--Klee property. Note that
 in the proposition below, a componentwise convergence formulation is utilized instead of a weak or weak* convergence formulation. Our modification avoids usage of non-standard theory of dual spaces of non-locally convex topological vector spaces $ \ell_p, \ 0 < p < 1 $.
Note moreover that for bounded sequences in $ \ell_p $, with $ p > 1 $, componentwise convergence is equivalent with weak convergence, cf.~e.g., \cite[Theorem 8.20]{Nair[08]}.
In $ \ell_1  = c_0^\myprime $, a similar statement holds for weak* convergence.
%%
%%%\cite[pp.~78--80]{Riesz_Nagy[56]}.
%% allows an extention to spaces $ \ell_p, \, 0 \le p < 1 $.
%
\begin{proposition}[Radon--Riesz property] Consider for $ 0 < p < \infty $ the sequence $ \{u_{n}\}_{n=1}^\infty \subset \ell_p\, $ as well as the element $ u \in \ell_p $,
and let the following two conditions be satisfied:
(a) convergence of $ u_n $ to $ u $ as $ n \to \infty $ holds componentwise, i.e.~$ u_{n,k} \to u_k $ as $ n \to \infty $ for each $ k \ge 1 $,
and (b)
$ \Vert u_{n} \Vert_p \to \Vert u \Vert_p $ as $ n \to \infty $.
Then $ \Vert u_{n} - u \Vert_p \to 0 $ as $ n \to \infty $.
%%% Then the following two statements are equivalent:
%
%%%\begin{enumerate}
%%\item We have $ \Vert u^{(m)} - u \Vert_p \to 0  $ as $ m \to \infty $.
%%%\item
%%%We have each componentwise convergence, i.e.~$ u^{(m)}_n \to u_n $ as $ m \to \infty $ for each $ m \ge 1 $, and moreover
%%$ \Vert u^{(m)} \Vert_p \to \Vert u \Vert_p  $ as $ m \to \infty $.
%%%\end{enumerate}
\label{th:radon_riesz}
\end{proposition}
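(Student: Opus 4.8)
The plan is to reduce everything to the $p$-th powers, since $\|u_n-u\|_p\to 0$ is equivalent to $\|u_n-u\|_p^p\to 0$, and the latter quantity is additive over the coordinates. Fixing $\varepsilon>0$, I would first use $u\in\ell_p$ to choose a truncation index $K$ with $\sum_{k>K}|u_k|^p<\varepsilon$, and then split
\[
\|u_n-u\|_p^p=\sum_{k=1}^{K}|u_{n,k}-u_k|^p+\sum_{k>K}|u_{n,k}-u_k|^p.
\]
The finite head is harmless: it is a sum of finitely many terms, each tending to $0$ by the componentwise convergence in hypothesis (a), so for every fixed $K$ it vanishes as $n\to\infty$.

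For the tail I would use the elementary inequality $|a-b|^p\le C_p(|a|^p+|b|^p)$, valid for all $0<p<\infty$ with $C_p=\max\{1,2^{p-1}\}$, to bound
\[
\sum_{k>K}|u_{n,k}-u_k|^p\le C_p\Big(\sum_{k>K}|u_{n,k}|^p+\sum_{k>K}|u_k|^p\Big).
\]
The second tail is already $<\varepsilon$ by the choice of $K$. The decisive point is to control the first tail uniformly in $n$, and this is exactly where hypothesis (b) enters: writing
\[
\sum_{k>K}|u_{n,k}|^p=\|u_n\|_p^p-\sum_{k=1}^{K}|u_{n,k}|^p,
\]
the total mass $\|u_n\|_p^p$ converges to $\|u\|_p^p$ by (b), while the finite head $\sum_{k=1}^{K}|u_{n,k}|^p$ converges to $\sum_{k=1}^{K}|u_k|^p$ by (a); hence the first tail converges to $\|u\|_p^p-\sum_{k=1}^{K}|u_k|^p=\sum_{k>K}|u_k|^p<\varepsilon$.

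Combining these estimates yields $\limsup_{n\to\infty}\|u_n-u\|_p^p\le 2C_p\varepsilon$, and letting $\varepsilon\to0$ gives the claim. The main obstacle is precisely the tail of the $u_n$: componentwise convergence alone permits mass to escape to infinity (think of shifting unit spikes), so the argument must convert the global norm convergence (b) into uniform smallness of $\sum_{k>K}|u_{n,k}|^p$. The displayed identity, which subtracts a convergent head from a convergent total, is the mechanism that rules out this escape of mass; it is the discrete counterpart of the Brezis--Lieb lemma, which one could alternatively invoke, but the direct tail estimate keeps the proof self-contained and uniform across the entire range $0<p<\infty$, including the quasi-normed regime $0<p<1$.
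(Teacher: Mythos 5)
Your proof is correct, and it takes a genuinely different (and more self-contained) route than the paper. The paper's proof is split into two cases: for $p>1$ it invokes the Radon--Riesz property of uniformly convex spaces (after identifying componentwise convergence of bounded sequences with weak convergence in $\ell_p$), and for $p\le 1$ it explicitly leaves the elementary verification as an exercise, pointing to a similar proof technique in the literature. You instead give a single direct argument valid uniformly for all $0<p<\infty$: the head/tail decomposition, the elementary inequality $|a-b|^p\le C_p(|a|^p+|b|^p)$, and --- the key step --- the identity $\sum_{k>K}|u_{n,k}|^p=\|u_n\|_p^p-\sum_{k\le K}|u_{n,k}|^p$, which converts the global norm convergence (b) into uniform-in-$n$ smallness of the tails and thereby rules out escape of mass to infinity. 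This is precisely the kind of elementary calculus argument the paper alludes to when it remarks that such a proof ``in fact works out for any $0<p<\infty$,'' so in effect you have filled in the exercise the paper declines to write out: your version needs no uniform convexity, no weak-convergence dictionary, and no case distinction, at the modest cost of being longer than a citation. The Brezis--Lieb remark at the end is apt; your tail estimate is indeed its discrete counterpart, and either mechanism closes the argument.
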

\begin{proof}
%%% We first note that
%%%for bounded sequences in $ \ell_p $, with $ p > 1 $, componentwise convergence is equivalent with weak convergence, cf.~e.g., \cite[Theorem 8.20]{Nair[08]}.
%%%In $ \ell_1  = (c_0)^\prime $, a similar statement holds for weak* convergence.
For $ p >1 $, the statement of the proposition follows from the Radon--Riesz property of uniformly convex spaces, cf.~e.g., \cite[Proposition 3.32]{Brezis[11]}.
%%%\cite[pp.~78--80]{Riesz_Nagy[56]}.
For $ p \le 1 $, the statement can be verified using elementary means from calculus, since it is formulated for $ \ell_p $ spaces here. The proof (which in fact works out for any $ 0 < p < \infty $) is left as an exercise; cf.~also \cite[Proposition 3.6]{Grasmair09b} for a similar proof technique, and also for a result closely related to our proposition in fact.
\end{proof}

In what follows, we frequently make use of indices
\begin{align}
0 \le p \le q \le \mytau < \infty, \quad 1 \le \mytau < \myr < \infty,
\label{eq:parameters}
\end{align}
so in particular the following chain of inclusions holds:
\begin{align*}
\ell_p \subset \ell_q \subset \ell_\mytau \subset \ell_\myr.
\end{align*}
%
%%The meaning of the parameters in (\ref{eq:parameters}) is follows:
The meaning of the parameters in (\ref{eq:parameters}) is as follows:
\begin{itemize}
\item The parameter $ p $ determines the stabilizing functional utilized in Tikhonov regularization introduced in (\ref{eq:Tik}) below,
%% i.e. we use $ \Vert \cdot \Vert_p^p $,

\item the parameter $ q $ corresponds to the summability property of the entries
of the \searchedsolb $ u^\dagger $ of equation (\ref{eq:opeq}) considered below,
i.e.~we assume $ u^\dagger \in \ell_q $,
%% to the space $ \ell_q $ to which
%%%s the decay property of the entries of
%%the searched-for solution $ u^\dagger $
%%% below,
%%
%%s the decay property of the entries of the searched-for solution $ u^\dagger $,
\item the parameter $ r $ determines the norm utilized for measuring the error, i.e.~we use $ \Vert \cdot \Vert_r $ here,

\item and the parameter $ a $  determines the conditional stability of the linear operator under consideration, cf.~(\ref{eq:twosided}) below.
\end{itemize}

The present article intends to be a contribution to the theory of variational regularization for linear ill-posed problems in abstract spaces, and we refer in this context for example to
the textbooks and monographs \cite{EHN96,Louis89,Scherzerbuch09,Schusterbuch12}.
We consider here, by using variational regularization, stable approximate solutions to a specific class of ill-posed linear operator equations, which attain the form
 \begin{equation} \label{eq:opeq}
A u = v \quad (u \in \ell_\mytau,\;v \in  V),
\end{equation}
where $V$ is a real Banach space and $A: \ell_\mytau \to V$ an injective bounded non-compact operator with non-closed range $\mathcal{R}(A)$,
for which the two-sided estimate
 \begin{equation} \label{eq:twosided}
d_1\,\|u\|_\myr \le \|A u\|_V \le d_2\,\|u\|_\myr  \qquad (u \in \ell_\mytau)
\end{equation}
is valid for constants $0<d_1 \le d_2 < \infty$. Evidently, \eqref{eq:twosided} shows that there is a bounded and continuously invertible linear operator $B: \ell_\myr \to V$, which is an extension of $A$ to $\ell_\myr$, and vice versa $A$ is the restriction of $B$ to $\ell_\mytau$.
Note that one can write $A=B \circ \mathcal{E}_\mytau^\myr: \ell_\mytau \to V$ with the embedding operator $\mathcal{E}_\mytau^\myr: \ell_\mytau \to \ell_\myr$.
\footnote{In the limiting case $r=a$, where the operator equation \eqref{eq:opeq} is well-posed, we have $A=B$ with a closed range $\mathcal{R}(A)$.}

Let
\begin{equation}
\label{eq:vdelta}
v^\delta \in V, \quad
\Vert v^\delta - v \Vert_V \le \delta, \quad \delta > 0,
\end{equation}
denote small perturbations of the right-hand side of the given equation
(\ref{eq:opeq}).
Variational regularization aims here at minimizing the Tikhonov functional
\begin{equation}
\label{eq:Tik}
T_\alpha^\delta(u):=\|Au-v^\delta\|_V^\sigma + \alpha\,\Rp{u}
\end{equation}
over $\ell_\mytau \;(1 \le \mytau < \myr)$, with $0 \le p \le \mytau$,
%
%%%where
%
%%\begin{align}
%%\label{eq:rpdef}
%%\Rp{u}=\|u\|_p^p \quad (p>0), \qquad \Rp[0]{u}= \|u\|_0,
%%\end{align}
%
with exponents $\sigma>0$ and regularization parameters $\alpha>0$. Let denote by
$$ u_\alpha^\delta = \textup{arg} \min \,\{ T_{\alpha}^\delta(u) \,|\; u \in \ell_p \} $$
the associated regularized solution and mention that the non-negative functional $T_\alpha^\delta(u)$ attains finite values if and only if $u \in \ell_p$ holds.

\bh{We recall that variational approaches of type \eqref{eq:Tik} with penalty functionals $\mathcal{R}_p$ from \eqref{eq:rpdef} and $0 \le p \le 2$  are referred to in the literature as \emph{regularization with sparsity constraints} or \emph{sparse regularization}
(see, e.g.,~\cite{Bredies08,Burger13,Grasmair09,Jin12,Lorenz08,Ramlau08,Wei19}), where in particular it can be shown under weak additional assumptions that the regularized solutions $ u_\alpha^\delta$ are indeed sparse for all $0 \le p \le 1$.}

In the following lemma, restrictions of the operator $ A : \ell_\mytau \to V $ to     spaces $ \ell_s \ (1 \le s \le \mytau) $ are denoted by $ A $ again. The same holds for uniquely determined extensions of $ A $ to spaces $ \ell_s \ (\mytau \le s \le \myr $).
%
%%\pagebreak
\begin{lemma} \label{lem:basics}
Under the assumptions stated above, we have
\begin{itemize}
\item[(a)] The linear operator $A$ is norm-to-norm continuous from $\ell_s$ to $V$ for $1 \le s \le \myr$, i.e.~there are positive constants $K_s$ such that $\|Au\|_V \le K_s\,\|u\|_s\;\forall u \in \ell_s$.
\item[(b)] For $ 1 \le s \le \myr$, the operator $A: \ell_s\to V$ is weak-to-weak continuous, i.e.~a weakly convergent sequence $u_n \rightharpoonup u_0$ in $\ell_s$ implies weak convergence $A u_n \rightharpoonup A u_0$ in $V$.
\item[(c)] For the non-reflexive space $\ell_1$ with the predual space $c_0$, the operator $A: \ell_1\to V$ is weak$^*$-to-weak continuous, i.e.~a weak$^*$-convergent sequence $u_n \rightharpoonup^* u_0$ in $\ell_1$ implies weak convergence $A u_n \rightharpoonup A u_0$ in $V$.
\end{itemize}
\end{lemma}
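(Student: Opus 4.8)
The plan is to establish the three parts in sequence, with each resting on the preceding one. For part (a) I would use the factorization $A = B \circ \mathcal{E}_\mytau^\myr$ together with the upper bound in the two-sided estimate \eqref{eq:twosided}. Since $1 \le s \le \myr$, the monotonicity \eqref{eq:monoton} gives $\Vert u \Vert_\myr \le \Vert u \Vert_s$ for every $u \in \ell_s$, while the bounded extension $B : \ell_\myr \to V$ satisfies $\Vert B u \Vert_V \le d_2 \Vert u \Vert_\myr$. Because $A u = B u$ for $u \in \ell_s$, chaining these inequalities yields $\Vert A u \Vert_V \le d_2 \Vert u \Vert_\myr \le d_2 \Vert u \Vert_s$, so one may take $K_s = d_2$ uniformly in $s$.

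Part (b) would then follow from the general principle that a bounded linear operator between normed spaces is automatically weak-to-weak continuous; here $\ell_s$ is a genuine Banach space precisely because $1 \le s$. Concretely, given $u_n \rightharpoonup u_0$ in $\ell_s$ and an arbitrary $\phi \in V^*$, part (a) shows that $\phi \circ A$ is a bounded linear functional on $\ell_s$, so $\phi(A u_n) = (\phi \circ A)(u_n) \to (\phi \circ A)(u_0) = \phi(A u_0)$; as $\phi$ is arbitrary, $A u_n \rightharpoonup A u_0$ in $V$.

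The only genuinely delicate point is part (c), where weak$^*$ convergence in $\ell_1 = c_0^\myprime$ tests against the proper subspace $c_0 \subsetneq \ell_\infty = (\ell_1)^*$ only, so the argument of (b) does not transfer. The key observation I would make is that, although the functional $\phi \circ A$ is a priori merely an element of $\ell_\infty$, it in fact lies in $c_0$. Indeed, factoring $A$ through the inclusion $\ell_1 \hookrightarrow \ell_\myr$ followed by $B$, and setting $g := \phi \circ B$, the reflexivity of $\ell_\myr$ for $1 < \myr < \infty$ identifies $g$ with an element of $\ell_b$, where $b$ denotes the conjugate exponent, $\tfrac{1}{\myr} + \tfrac{1}{b} = 1$ and $1 < b < \infty$. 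Since any sequence with finite $\ell_b$-norm converges to zero, we obtain $g \in c_0$, and $(\phi \circ A)(u) = \langle g, u \rangle$ for all $u \in \ell_1$. Hence, for $u_n \rightharpoonup^* u_0$ in $\ell_1$, testing against the predual element $g \in c_0$ gives $\phi(A u_n) = \langle g, u_n \rangle \to \langle g, u_0 \rangle = \phi(A u_0)$, and letting $\phi$ range over $V^*$ yields $A u_n \rightharpoonup A u_0$.

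I expect the main obstacle to be exactly this identification in part (c): that the functionals $\phi \circ A$ fall into the predual $c_0$ rather than all of $\ell_\infty$. This is what rescues weak$^*$-to-weak continuity despite the non-reflexivity of $\ell_1$, and it hinges essentially on $\myr$ being finite, so that the conjugate exponent $b$ is also finite and $\ell_b \subset c_0$. The remaining verifications are routine consequences of boundedness and the standard duality of the $\ell_p$ spaces.
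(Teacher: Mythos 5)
Your parts (a) and (b) coincide with the paper's own argument: (a) chains the right-hand inequality of \eqref{eq:twosided} (extended to $B$ on $\ell_\myr$) with the monotonicity \eqref{eq:monoton}, and (b) invokes the standard fact that norm-to-norm continuous linear operators between Banach spaces are weak-to-weak continuous. In part (c), however, you take a genuinely different route. The paper cites \cite[Lemma~9.5]{Flemmingbuch18}, which reduces weak$^*$-to-weak continuity of a bounded operator defined on $\ell_1$ to checking that the images of the unit sequences $e^{(k)}$ tend weakly to zero; this is verified for the embedding $\mathcal{E}_1^\myr:\ell_1 \to \ell_\myr$ (true precisely because $\myr>1$), and the conclusion follows by composing with the weak-to-weak continuous operator $B$. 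You instead prove the statement directly: for $\phi \in V^\myprime$, the functional $\phi \circ B$ is represented by some $g \in \ell_b$ with $\frac{1}{\myr}+\frac{1}{b}=1$ via the classical duality $(\ell_\myr)^\myprime \cong \ell_b$, and since $b<\infty$ this $g$ lies in $c_0$ --- exactly the space against which weak$^*$ convergence in $\ell_1 = c_0^\myprime$ is tested. This is correct and self-contained: it replaces the external lemma by an elementary duality computation, whereas the paper's version is shorter and isolates a reusable criterion. Two small corrections to your framing: the identification $(\ell_\myr)^\myprime \cong \ell_b$ is the classical sequence-space duality, not something obtained from reflexivity (reflexivity is a consequence of it, not the tool); and the essential hypothesis is $\myr>1$ rather than $\myr<\infty$, since it is $\myr>1$ that forces $b<\infty$ and hence $\ell_b \subset c_0$. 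Indeed, for $\myr=1$ one gets $b=\infty$, the representing sequence need not vanish at infinity, and the statement itself fails: take $A$ the identity on $\ell_1$, where $e^{(k)} \rightharpoonup^* 0$ but, by Schur's property, $e^{(k)} \not\rightharpoonup 0$ weakly in $\ell_1$.
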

\begin{proof}Item (a) is a consequence of the right inequality in \eqref{eq:twosided} in combination with \eqref{eq:monoton}. Since every norm-to-norm continuous linear operator mapping between Banach spaces is also weak-to-weak continuous, this yields item (b).
%%%Due to \cite[p.~115]{Flemmingbuch18}, item (c) holds, because with $s:= \myr>1$ the operator $A: \ell_1\to V$ has a bounded extension of the form $A: \ell_s \to V$ with $s>1$.
\rpb{Finally, item (c) holds for $A=B \circ \mathcal{E}_1^\myr: \ell_1 \to V$ with the bounded linear operator $B:\ell_a \to V$ and the embedding operator $\mathcal{E}_1^\myr: \ell_1 \to \ell_\myr$,
because $a>1$. Namely, just for $a>1$, the embedding operator $\mathcal{E}_1^\myr$ is weak$^*$-to-weak continuous due to \cite[Lemma~9.5]{Flemmingbuch18}, because $\mathcal{E}_1^\myr e^{(k)}= e^{(k)}$
converges for $k \to \infty$ weakly to zero in $\ell_a$, where $e^{(k)}$ denotes the $k$-th unit sequence. Since the bounded linear operator $B$ is also weak-to-weak continuous,
the composite operator $A$ is  weak$^*$-to-weak continuous as required.} This completes the proof. \end{proof}
\subsection{Outline}
The paper is organized as follows: Section~\ref{sec:illposendnesstypes} \rpb{distinguishes in the sense of Nashed \cite{Nashed86} some ill-posedness types that may occur for bounded linear ill-posed problems in Banach spaces. We consider the case of \emph{injective} operators, since the focus is on an application to sequence spaces, where such operators occur. For extended studies, including the case of non-injective operators in Banach spaces, we refer to the recent paper \cite{HofKin25}}.
Section \ref{sec:wellposedness} is devoted to the well-posedness of Tikhonov regularization with oversmoothing penalties in $ \ell_p $ spaces.
In Section \ref{apriori_parameter_choice}, we introduce an appropriate a priori parameter choice for the variational regularization
(\ref{eq:Tik}) introduced above. In addition, corresponding convergence rates are presented, and a post processing process is considered that yields sparse regularizers without losing any accuracy. Finally, Section~\ref{conclusions_outlook} presents a conclusion and an outlook.
We note that the focus of Section \ref{apriori_parameter_choice} is on convergence analysis. In contrast, computational complexity issues are not considered in this paper. The same applies to the sparsity-promoting features of variational regularization utilizing penalty functionals
$ \Rppur $ for $ 0 < p \le 1 $.
%%%For results on the latter topic in the non-oversmoothing framework, see e.g.
%%%\cite{DDM[04]}, TBC.
%
\section{Ill-posedness types of linear operator equations in Banach spaces and a specific model scheme} \label{sec:illposendnesstypes}

In this section, we are going to apply the results of \cite[Section~4]{FHV15}, and repeat Figure~1 ibid for illustration, in order to classify our present discussion of oversmoothing regularization in sequence spaces from the perspective of Nashed's characterization of ill-posedness types introduced in the seminal paper \cite{Nashed86}.
\emph{Strictly singular and non-compact} injective bounded linear operators acting between Banach spaces, will play the most prominent role in these discussions, because such linear operators with non-closed range cannot occur as mappings between Hilbert spaces.

\subsection{General case distinctions} \label{subsec:general}

In the sense of \cite{Nashed86} we have the following definition:

\begin{definition} \label{def:type}
Let $T: X \to Y$ be an injective and bounded linear operator mapping between the infinite dimensional Banach spaces $X$ and $Y$.  Then the operator equation
\begin{equation} \label{eq:Nashed}
T x=y \quad (x \in X,\;y \in  Y)
\end{equation}
is called {\sl well-posed} if the range $\mathcal{R}(T)$ of $T$ is a closed subset of $Y$. Consequently \eqref{eq:Nashed} is called {\sl ill-posed} if the range $\mathcal{R}(T)$ is not closed, i.e.~$\mathcal{R}(T) \not= \overline{\mathcal{R}(T)}^{Y}$.
In the ill-posed case, the equation (\ref{eq:Nashed}) is called {\sl ill-posed of type I} if the range $\mathcal{R}(T)$ contains an {\sl infinite dimensional closed subspace}, and it is called {\sl ill-posed of type~II} otherwise.
\end{definition}

The diagram in Figure~1 %\ref{fig:FHV15}
below illustrates the different cases occurring for equation \eqref{eq:Nashed} in our Banach space setting. We precede an explanation with the following definition.

\begin{definition} \label{def:strictlysingular}
A bounded linear operator $T:X \to Y$ mapping between the infinite dimensional Banach spaces $X$ and $Y$ is \emph{strictly singular} if its restriction to an infinite dimensional closed subspace of $X$ is never an isomorphism.
\end{definition}

By definition, the operator $T:X \to Y$ is strictly singular if and only if the range $\mathcal{R}(T)$ of $T$ does not contain  an infinite dimensional closed subspace of $Y$. Consequently, this property of strict singularity separates the left part of Figure~1 %\ref{fig:FHV15}
from the right.
Not strictly singular operators $T$ (left part of Figure~1) %\ref{fig:FHV15})
lead either to \emph{well-posed} operator equations \eqref{eq:Nashed} if $T$ is \emph{continuously invertible} or to equations \emph{ill-posed of type~I} if $T$ has a non-closed range $\mathcal{R}(T)$. However, bounded injective and strictly singular linear operators $T$
mapping between infinite dimensional Banach spaces (right part of Figure~1) %\ref{fig:FHV15})
have always a non-closed range $\mathcal{R}(T)$ and lead therefore to operator equations \eqref{eq:Nashed} \emph{ill-posed of type~II} (see, e.g., \cite[Prop.~4.6]{FHV15}). The \emph{compact} operators $T$ form a significant subclass
for this case, which in particular fills out the right part of the figure completely in a Hilbert space setting. Namely, a bounded injective operator $T$ mapping between infinite dimensional Hilbert spaces is strictly singular if and only if it is compact. In Banach spaces the situation is more complex, because injective bounded non-compact and strictly singular linear operators occur, which show that the compact operators need not fill out the right part of the figure. Embedding operators  $\mathcal{E}_\mytau^\myr: \ell_\mytau \to \ell_\myr\;(1 \le r <a)$ mapping between sequence spaces with different indices deliver suitable counterexamples.

\begin{figure}[ht]\label{fig:FHV15}
\centerline{\input{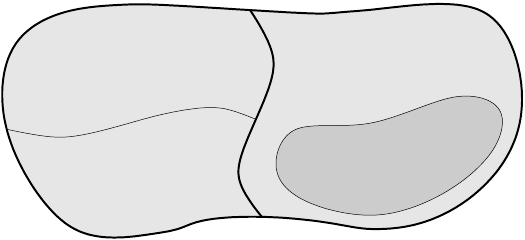_t}}
\caption{Relations between strict singularity,  compactness and type of ill-posedness for equations in Banach spaces with injective bounded linear operators (cf.~\cite[p.~287]{FHV15}).}
\end{figure}

\subsection{A specific model scheme adopted to sequence spaces} \label{subsec:scheme}
\label{spec_model}%
For generating the model scheme, we consider infinite dimensional real Banach spaces $U,V,W$ and injective bounded linear operators $A: U \to V$, $S: U \to W$ as well as $B: W \to V$. In particular, let $B$ be a \emph{continuously invertible} operator, $S$ a \emph{non-compact and strictly singular} operator and $A= B \circ S$ a composition operator, which is then evidently also non-compact and strictly singular. We consider the couple of linear operator equations
 \begin{equation} \label{eq:A}
A u = v \quad (u \in U,\;v \in  V)
\end{equation}
and
\begin{equation} \label{eq:B}
B w= v \quad (w \in W,\;v \in V),
\end{equation}
where the latter equation \eqref{eq:B} is \emph{well-posed}, whereas the former equation \eqref{eq:A} is \emph{ill-posed of type~II} with \emph{non-compact and strictly singular} forward operator $A$.

We adopt this scheme to a situation introduced in Section~\ref{sec:intro} for the  \emph{scale of sequences spaces} introduced above. Let in this context $1 \le \mytau <\myr<\infty$. Moreover, consider $U:=\ell_\mytau$, $W:=\ell_a$ and $V$ as an arbitrary real Banach space. In particular, we define $S:=\mathcal{E}_\mytau^\myr: \ell_\mytau \to \ell_ \myr$ as the \emph{embedding} from $\ell_\mytau$ to $\ell_\myr$, which is an \emph{injective, bounded, non-compact and strictly singular} linear operator (see, e.g., \cite{GoldThorp63,Lef14}). Then for any prescribed continuously invertible operator $B: \ell_\myr \to V$,  the operator equation \eqref{eq:A}, which is ill-posed of type~II, attains the form \eqref{eq:opeq} with the bounded injective, non-compact and strictly singular linear operator $A=B \circ \mathcal{E}_\mytau^\myr: \ell_\mytau \to V$.

\section{Well-posedness} \label{sec:wellposedness}

In this section, we discuss the \emph{existence and stability} of minimizers $u_\alpha^\delta$ to the Tikhonov functional \eqref{eq:Tik}, where we refer in this context to the blueprints from \cite{Flemmingbuch18,HKPS07} and \cite{Scherzerbuch09,Schusterbuch12}. In particular, we distinguish
for the sketch of a proof the three cases as there are \linebreak
$(i):$ $\mytau>1$ with $0 < p \le \mytau$\,; $\;(ii):$ $0<p \le 1=\mytau$\,;\, and \, $(iii):$ $p=0$ of occurring penalty functionals $\mathcal{R}$.

\begin{proposition} \label{pro:well}
Let $0 \le p \le \mytau$ be fixed. Then there exists for all $\alpha>0$ and $v^\delta \in V$ a regularized solution $u_\alpha^\delta \in \ell_p$ minimizing the Tikhonov functional \eqref{eq:Tik} over $\ell_\mytau$, i.e.~$T_\alpha^\delta(u_\alpha^\delta)=\inf_{u \in \ell_\mytau} T_\alpha^\delta(u)$.
Moreover, every minimizing sequence  $\{u_n\}_{n=1}^\infty \subset \ell_p$ such that $\lim_{n \to \infty} T_\alpha^\delta(u_n)=T_\alpha^\delta(u_\alpha^\delta)$ has a subsequence $\{u_{n_k}\}_{k=1}^\infty$ that converges strongly in $\ell_\mytau$ as $k \to \infty$
%%% (if $\mytau>1$) or weak$^*$ in $\ell_1$ (if $\mytau=1$)
to a minimizer of the Tikhonov functional $T_\alpha^\delta$.
\end{proposition}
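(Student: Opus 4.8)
The plan is to run the direct method of the calculus of variations, organised around the notion of \emph{componentwise} convergence so that the three regimes $(i)$--$(iii)$ can be handled in parallel. Starting from a minimizing sequence $\{u_n\} \subset \ell_p$ with $T_\alpha^\delta(u_n) \to m := \inf_{u \in \ell_\mytau} T_\alpha^\delta(u)$, I would first harvest boundedness information. Since both summands of \eqref{eq:Tik} are nonnegative and $T_\alpha^\delta(u_n)$ is eventually bounded by $m+1$, the penalty values $\Rp{u_n}$ and the residuals $\|Au_n - v^\delta\|_V$ are bounded. For $p>0$ this directly bounds $\|u_n\|_p$, and hence, via the monotonicity \eqref{eq:monoton} and $p \le \mytau$, the sequence is bounded in $\ell_\mytau$. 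For $p=0$ the penalty only controls the number of nonzero entries, so here I would additionally invoke the left inequality in \eqref{eq:twosided} together with $\|Au_n\|_V \le \|Au_n - v^\delta\|_V + \|v^\delta\|_V$ to bound $\|u_n\|_\myr$; combined with the uniformly bounded support size this still yields boundedness in $\ell_\mytau$, all $\ell_s$-norms being equivalent on a fixed finite number of coordinates. In every case the entries $u_{n,k}$ are then uniformly bounded in $k$, so a diagonal Bolzano--Weierstrass argument produces a subsequence $\{u_{n_j}\}$ converging componentwise to some $\bar u$, which lies in $\ell_\mytau$ by Fatou's lemma.

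Next I would establish lower semicontinuity of both terms of $T_\alpha^\delta$ along this subsequence. For the penalty this is Fatou's lemma when $p>0$, giving $\Rp{\bar u} \le \liminf_j \Rp{u_{n_j}}$, and for $p=0$ it is the elementary observation that the support of $\bar u$ is eventually contained in that of $u_{n_j}$, whence $\|\bar u\|_0 \le \liminf_j \|u_{n_j}\|_0$; in either case $\bar u$ belongs to the domain $\ell_p$. For the residual I would first upgrade componentwise convergence to weak convergence in $\ell_\mytau$ when $\mytau>1$, or to weak$^*$ convergence in $\ell_1$ when $\mytau=1$ (these coincide with componentwise convergence for bounded sequences, as recalled before Proposition~\ref{th:radon_riesz}), and then apply the weak-to-weak, respectively weak$^*$-to-weak, continuity of $A$ from Lemma~\ref{lem:basics}(b),(c) to obtain $Au_{n_j} \rightharpoonup A\bar u$ in $V$. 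Weak lower semicontinuity of the norm then gives $\|A\bar u - v^\delta\|_V^\sigma \le \liminf_j \|Au_{n_j} - v^\delta\|_V^\sigma$. Adding the two inequalities and using superadditivity of $\liminf$ yields $T_\alpha^\delta(\bar u) \le \liminf_j T_\alpha^\delta(u_{n_j}) = m$, so $\bar u$ is a minimizer and existence is settled.

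Finally, for the stability claim I would promote the subsequence to strong convergence in $\ell_\mytau$. Since $\bar u$ is a minimizer, $\|A\bar u - v^\delta\|_V^\sigma + \alpha\Rp{\bar u} = m = \lim_j T_\alpha^\delta(u_{n_j})$, and a standard squeezing argument forces both lower-semicontinuity inequalities to become equalities in the limit; in particular $\Rp{u_{n_j}} \to \Rp{\bar u}$. For $p>0$ this means $\|u_{n_j}\|_p \to \|\bar u\|_p$, and together with componentwise convergence the Radon--Riesz property (Proposition~\ref{th:radon_riesz}) gives $\|u_{n_j} - \bar u\|_p \to 0$, whence $\|u_{n_j} - \bar u\|_\mytau \to 0$ by \eqref{eq:monoton}. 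For $p=0$ the integer-valued quantities $\|u_{n_j}\|_0$ converge to $\|\bar u\|_0$ and are therefore eventually equal to it; combined with the support inclusion from the previous step, the supports of $u_{n_j}$ and $\bar u$ eventually coincide, so the convergence lives on a fixed finite coordinate set and componentwise convergence upgrades to $\ell_\mytau$-convergence directly. The main obstacle I anticipate is the case bookkeeping: securing $\ell_\mytau$-boundedness for $p=0$ from the residual rather than the penalty, and correctly matching the convergence topology (weak for $\mytau>1$, weak$^*$ for $\mytau=1$) with the componentwise formulation of the Radon--Riesz property, so that the non-locally-convex cases $p<1$ are covered without appealing to duals of $\ell_p$.
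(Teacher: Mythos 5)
Your proposal is correct, and its skeleton matches the paper's proof: direct method, lower semicontinuity of both terms of $T_\alpha^\delta$ (with Lemma~\ref{lem:basics}(b),(c) feeding the misfit term), the squeezing argument that upgrades lower semicontinuity to $\Rp{u_{n_j}} \to \Rp{\bar u}$, the Radon--Riesz property of Proposition~\ref{th:radon_riesz} for $p>0$, and eventual stabilization of supports for $p=0$. The genuine difference is organizational, in the compactness step. The paper splits into three cases and extracts subsequences topologically: weak compactness of sublevel sets in the reflexive space $\ell_\mytau$ for $\mytau>1$, Banach--Alaoglu in $\ell_1=c_0^\myprime$ for $\mytau=1$, and, for $p=0$, weak convergence in $\ell_\myr$ combined with a dedicated lemma (Lemma~\ref{lem:Wei}) giving lower semicontinuity of $\|\cdot\|_0$. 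You instead make componentwise convergence the primary notion throughout: a diagonal Bolzano--Weierstrass extraction, Fatou's lemma for the penalty when $p>0$, and the elementary support argument when $p=0$ (which is precisely the content of Lemma~\ref{lem:Wei}, reproved with componentwise instead of weak convergence); the weak/weak$^*$ topologies enter only where you need $Au_{n_j}\rightharpoonup A\bar u$, via the equivalence of componentwise and weak (resp.\ weak$^*$) convergence for bounded sequences. This buys a uniform treatment of all $0\le p\le \mytau$, and in particular it sidesteps asserting weak lower semicontinuity of the non-convex functional $\|\cdot\|_p^p$ for $p<1$, which the paper's case (i) passes over quickly; the price is re-deriving by hand compactness facts that the paper gets from standard theorems. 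Two small points: your $\ell_\mytau$-boundedness for $p=0$ rests on the norm-equivalence constant depending only on the \emph{cardinality} of the support (the supports themselves vary with $n$, so "fixed finite number of coordinates" should be read as "uniformly bounded cardinality"), whereas the paper avoids this by simply working in $\ell_\myr$ for that case; and your clean bound $\|Au_n\|_V \le \|Au_n - v^\delta\|_V + \|v^\delta\|_V$ plays the role of the paper's slightly more roundabout comparison with an element $u^\dagger$ satisfying $\|Au^\dagger - v^\delta\|_V \le \delta$. Neither point is a gap.
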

\begin{remark}\rm
As a consequence Proposition~\ref{pro:well} the regularized solutions $u_\alpha^\delta$ are stable (in the respective weak or weak$^*$ sense) with respect to small perturbations in the norm of $V$ for the data $v^\delta \in V$.
\end{remark}

\begin{proof}[Sketch of a proof of Proposition~\ref{pro:well}]
To prove the proposition we follow the lines of Section~4.1.1 from \cite{Schusterbuch12}. In particular, we have to prove with respect to the basis space $\ell_\mytau$ some \emph{stabilizing property} of the penalty functional $\Rp{u}=\|u\|_p^p$ for $p>0$ and the \emph{ weak (or weak$^*$) lower semi-continuity} of the Tikhonov functional $T_\alpha^\delta$. For $p=0$, the discussion is a bit more complex.
\begin{itemize}
\item[(i)] For $\mytau>1,\;0 < p \le \mytau$, the stabilizing property means that the sublevel sets $L_c:=\{u \in \ell_p\,|\;\|u\|_p^p \le c\}$ are bounded and therefore relatively weakly compact subsets of $\ell_\mytau$ for all constants $c>0$. Because of \eqref{eq:monoton} and $p \le \mytau$ implying that
$L_c \subset \{u \in \ell_\mytau\,|\;\|u\|_\mytau^p \le c\}$, this is the case.
Moreover, the penalty functional $\Rppur$ is here weakly lower semi-continuous with respect to $\ell_\mytau$. This is also the case for the misfit functional $\|A \cdot-v^\delta\|_V^\sigma$ as a consequence of item~(b) from Lemma~\ref{lem:basics} and because norm functionals are
always weakly lower semi-continuous. Then the Tikhonov functional $T_\alpha^\delta$ is everywhere weakly lower semi-continuous with respect to $\ell_\mytau$. Then minimizers $u_\alpha^\delta$ always exist and are stable.
In addition, for each minimizing sequence $ \{u_n\}_{n=1}^\infty \subset \ell_p $ with $ u_n \rightharpoonup u_\alpha^\delta $ as $ n \to \infty $ weakly in $ \ell_\mytau $, we necessarily do have $ \Vert u_n \Vert_p \to \Vert u_\alpha^\delta \Vert_p $, cf.~\cite[Proposition 4.2]{Schusterbuch12} or \cite[Theorem 3.23]{ Scherzerbuch09}. The
%%%Kadets--Klee
Radon--Riesz property of $ \ell_p $, cf.~Proposition~\ref{th:radon_riesz},
then implies
 $ \Vert u_n - u_\alpha^\delta \Vert_p \to 0 $, and thus
 $ \Vert u_n - u_\alpha^\delta \Vert_\mytau \to 0 $ as $ n \to \infty $.

\item[(ii)] In the case $0<p \le 1=\mytau$, for the space $\ell_1$ (with the predual space $c_0$) the weak topology has to be replaced by the weak$^*$-topology. Then the sublevel sets $L_c$ are again bounded subsets of $\ell_r=\ell_1$ for all $c>0$ and due to the Banach--Alaoglu theorem relatively weak$^*$ compact subsets of this space.
This yields the stabilizing property of the penalty here. The weak$^*$-lower semi-continuity of the Tikhonov functional follows from item (c) of Lemma~\ref{lem:basics} by taking into account that
the norm $ \Vert \cdot \Vert_V $ is a weakly lower semi-continuous functional on the Banach space $ V $ and that moreover
$ \Rppur $ is weak$^*$-lower semi-continuous on $ \ell^1 $.
Strong convergence $ \Vert u_n - u_\alpha^\delta \Vert_1 \to 0 $ now follows
similar to the case (i).
For the situations of $0<p<1$ see also the remarks of \cite[p.~385]{Grasmair09}.

\item[(iii)] For $p=0$, we cannot apply \eqref{eq:monoton}, but we can  extend \cite[Lemma~2.1]{Wei13} as follows:
\begin{lemma} \label{lem:Wei}
Let $\{u_n\}_{n=1}^\infty \subset \ell_0$ be a sequence weakly convergent in $\ell_\myr$ to $u_0 \in \ell_\myr$  such there exists a finite upper bound for $\{\|u_n\|_0\}_{n=1}^\infty$.
Then we have $u_0 \in \ell_0$ and $\|u_0\|_0 \le \liminf_{n \to \infty}\|u_n\|_0$.
\end{lemma}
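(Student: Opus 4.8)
The plan is to reduce everything to one observation—that weak convergence in $\ell_\myr$ forces coordinatewise convergence—and then finish with an elementary counting argument that handles both conclusions at once.

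First I would record the coordinate-convergence step. Since $\myr>1$, each coordinate evaluation $\phi_k\colon \ell_\myr\to\R$, $\phi_k(u)=u_k$, is a bounded linear functional (indeed $|u_k|\le\|u\|_\myr$), hence $\phi_k\in(\ell_\myr)^\ast$. The assumed weak convergence $u_n\rightharpoonup u_0$ in $\ell_\myr$ therefore yields the componentwise limits $u_{n,k}\to u_{0,k}$ as $n\to\infty$ for every fixed $k\ge 1$.

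Next I set $m:=\liminf_{n\to\infty}\|u_n\|_0$, which is finite because the hypothesis supplies a finite upper bound for $\{\|u_n\|_0\}_{n=1}^\infty$. I then claim $\|u_0\|_0\le m$, which simultaneously gives $u_0\in\ell_0$. Suppose $u_0$ had at least $L$ nonzero coordinates, say $u_{0,k_i}\neq 0$ for distinct indices $k_1,\dots,k_L$. Because $\|u_n\|_0$ is integer-valued with $\liminf$ equal to $m$, there is a subsequence along which $\|u_{n_j}\|_0=m$ exactly. By the coordinatewise convergence above, $u_{n_j,k_i}\to u_{0,k_i}\neq 0$, so for all sufficiently large $j$ each entry $u_{n_j,k_i}$ is nonzero, whence $\|u_{n_j}\|_0\ge L$. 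Combined with $\|u_{n_j}\|_0=m$ this forces $L\le m$. As $L$ was an arbitrary bound on the number of nonzero coordinates of $u_0$, the support of $u_0$ contains at most $m$ indices; that is, $u_0\in\ell_0$ and $\|u_0\|_0\le m=\liminf_{n\to\infty}\|u_n\|_0$.

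The only step with genuine content is the first one, converting weak convergence into coordinatewise convergence; here it is painless precisely because $\myr>1$ makes the coordinate functionals available, and the rest is pigeonholing. I would also stress that the finiteness of $\sup_n\|u_n\|_0$ is used essentially and cannot be dropped: without it $\liminf_{n}\|u_n\|_0$ may be infinite and the weak limit need not be sparse at all—for instance, the finitely supported truncations of a non-sparse sequence in $\ell_\myr$ converge weakly to that non-sparse limit—so the boundedness assumption is what secures $u_0\in\ell_0$.
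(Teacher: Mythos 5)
Your proof is correct and follows essentially the same route as the paper's: pass to a subsequence on which the integer-valued quantity $\|u_n\|_0$ is constantly equal to its $\liminf$, then use the fact that weak convergence in $\ell_\myr$ forces componentwise convergence to conclude that any collection of nonzero coordinates of $u_0$ must eventually be nonzero coordinates of the $u_{n_j}$, bounding $\|u_0\|_0$ by that constant. The only difference is presentational: you make explicit the coordinate-functional argument that the paper leaves implicit, and you phrase the counting step via an arbitrary bound $L$ rather than a direct contradiction with $m+1$.
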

\begin{proof}
After transition to an appropriate subsequence, if necessary and thus without loss of generality, we may assume that
$\|u_n\|_0 = m \in \mathbb{N} $ holds for $ n \ge n_0 $.
If $\|u_0\|_0 \ge m+1 $ would hold, then for some finite set $ K \subset \mathbb{N} $
with
$ \# K = m + 1 $ we have $ u_{0,k} \neq 0 $ for each $ k \in K $. The weak convergence of $ u_n $ then implies that, for some $ n_1 \ge n_0 $, we have $u_{n,k} \neq 0 $ for $ k \in K $ and thus $ \|u_n\|_0 \ge m + 1  $ for  $ n \ge n_1 $. This is a contradiction, thus
$\|u_0\|_0 \le m $ holds.
\end{proof}
Below we show that there exist minimizing sequences $\{u_n\}_{n=1}^\infty$ to the functional $T_\alpha^\delta$ with penalty $\Rp[0]{u}=\|u\|_0$ that are weakly converging to $u_0$ in $\ell_\myr$ and are
%%have the property that the sequence is bounded in $\ell_0$,
bounded in $\ell_0$.
As a consequence of Lemma~\ref{lem:Wei} in combination with the weak-to-weak continuity of $A: \ell_\myr \to V$, we then
have $u_0 \in \ell_0$, and
%%%$\|u_0\|_0 \le \liminf_{n \to \infty} \|u_n\|_0$.
$u_0$ is a minimizer of $T_\alpha^\delta$.
This is also the case for any other limits of convergent subsequences of $\{u_n\}_{n=1}^\infty$. Now any minimizing sequence $\{u_n\}_{n=1}^\infty$ to $T_\alpha^\delta$ with penalty $\Rp[0]{u}=\|u\|_0$ satisfies for some $K_1>0$ the inequality $\|u_n\|_0 \le K_1$ for all $n \in \N$ and is therefore bounded in $\ell_0$. Moreover, for any $u^\dagger \in \ell_\myr$ with $\|A u^\dagger-v^\delta\|_V \le \delta$ we have some constant $K_2>0$ such that the minimizing sequence  $\{u_n\}_{n=1}^\infty$ to $T_\alpha^\delta$ with penalty $\Rp[0]{u}=\|u\|_0$ satisfies
$\|Au_n-Au^\dagger\|_V^\sigma+ \alpha\,\|u_n\|_0 \le K_2$ (see also for discussions \cite[Section~3]{HKPS07}). Then the left estimate of \eqref{eq:twosided} gives $\|u_n-u^\dagger\|_\myr \le K_2^{1/\sigma}/d_1$ for all $n \in \N$. Hence $\{u_n\}_{n=1}^\infty$ is bounded in $\ell_\myr$ and has therefore some subsequence that is weakly convergent in $\ell_\myr$ to some element $u_0 \in \ell_0$.
For the verification of strong convergence in $ \ell_\mytau $, we consider any minimizing sequence
$ \{ u_n \}_{n=1}^\infty \subset \ell_0 $ converging componentwise to a Tikhonov minimizer $ u \in \ell_0 $. Then
$ \Vert u_n \Vert_0 \to \Vert u_\alpha^\delta \Vert_0 $ (cf.~item (i) for references) and thus
 $ \Vert u_n \Vert_0 = \Vert u_\alpha^\delta \Vert_0 $ for $ n $ large enough. We are thus in a finite-dimensional setting, so $ \Vert u_n - u_\alpha^\delta \Vert_p \to 0 $ as $ n \to \infty $ for any $ p > 0 $.
\end{itemize}
\end{proof}

\section{A priori parameter choice}
\label{apriori_parameter_choice}
In this section, we investigate the regularization properties
of the variational regularization (\ref{eq:Tik}), i.e.~we present convergence rates results for a suitable a priori parameter choice strategy.
%%%\rp{We note that the focus of this work is on convergence analysis. In contrast,
%%%complexity issues are not considered in this paper. The same applies to the sparsity-promoting features of variational regularization related to penalty functionals
%%%$ \Rppur $ for $ 0 < p \le 1 $.
%%%For results on the latter topic in the non-oversmoothing framework, see e.g.
%%%\cite{DDM[04]}, TBC.
%%}

As a preparation,
%%% for Theorem \ref{th:main} below,
we consider the following non-negative real numbers,
\begin{align}
\label{eq:gamma}
\mygama = \frac{\myr}{\mytau} \cdot \frac{\mytau - q}{\myr-q},
\qquad
\mygamb = \frac{(q-p)\myr}{\myr-q},
\end{align}
where the utilized parameters are introduced in (\ref{eq:parameters}).
%%
%%%with the additional assumption $ q < \myr $.
The numbers $ \mygama $ and $ \mygamb $ correspond to the convergence rate and the conditional stability rate obtained for our setting as $ \delta \to 0 $,
cf.~the following theorem for the details.
As a further preparation, we introduce the following a priori parameter choice,
\begin{align}
\label{eq:apriori}
\alpha_\delta = \delta^{\sigma+\frac{(q-p)\myr}{\myr-q}}, \quad \delta > 0,
\end{align}
which turns out to be suitable for our setting.
%%%\rpb{In addition, we assume that the
%%%\rpb{\searchedsolb} $ u^\dagger $ of equation (\ref{eq:opeq}) satisfies
%
%%%\begin{align}
%%%\label{eq:udagger}
%%u^\dagger \in \ell_q $,

We are now in a position to present our main result on the regularization properties of
variational regularization (\ref{eq:Tik}).
\begin{theorem}
\label{th:main}
Let conditions (\ref{eq:parameters})--(\ref{eq:vdelta}) be satisfied.
An a priori parameter choice $ \alpha = \alpha_\delta $
of the form (\ref{eq:apriori})
%%%falls $ q < \myr $:
gives
\begin{align}
\label{eq:main}
\Vert u_{\alpha_\delta}^\delta - u^\dagger \Vert_\mytau &=
\mathcal{O}(\delta^{\mygama}),
\quad
%%\Vert u_{\alpha_\delta}^\delta \Vert_p^p =
\Rp{u_{\alpha_\delta}^\delta} =
\mathcal{O}(\delta^{-\mygamb})
\quad  \textup{as } \delta \to 0,
\end{align}
where the real numbers $ \mygama $ and $ \mygamb $ are given by (\ref{eq:gamma}).
In addition, $ u_{\alpha}^\delta \in \ell_p $ denotes a minimizer of the Tikhonov functional (\ref{eq:Tik}), and the \searchedsol of equation (\ref{eq:opeq}) satisfies $ u^\dagger \in \ell_q $.
\end{theorem}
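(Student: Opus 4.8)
The plan is to follow the comparison-element technique for oversmoothing Tikhonov regularization, but with the comparison element produced by \emph{hard thresholding} of $u^\dagger$, which is the natural device in the sequence-space scale. For a threshold level $t > 0$ I would let $u^\dagger_t$ be the sequence obtained from $u^\dagger$ by retaining the components with $|u^\dagger_k| > t$ and zeroing out all others. Since $u^\dagger \in \ell_q$, only finitely many components exceed any positive threshold, so $u^\dagger_t \in \ell_0 \subset \ell_p$ is an admissible competitor in \eqref{eq:Tik}, and a minimizer $u_\alpha^\delta$ exists by Proposition~\ref{pro:well}. The three elementary thresholding estimates I would record first, each obtained by splitting $|u^\dagger_k|^s = |u^\dagger_k|^{s-q}\,|u^\dagger_k|^q$ and bounding $|u^\dagger_k|^{s-q}$ by the appropriate power of $t$, are
\begin{align*}
\|u^\dagger - u^\dagger_t\|_\myr \le \|u^\dagger\|_q^{q/\myr}\, t^{1 - q/\myr}, \quad
\|u^\dagger - u^\dagger_t\|_\mytau \le \|u^\dagger\|_q^{q/\mytau}\, t^{1 - q/\mytau}, \quad
\Rp{u^\dagger_t} \le \|u^\dagger\|_q^q\, t^{-(q-p)},
\end{align*}
where the last bound reads $\Rp[0]{u^\dagger_t} \le \|u^\dagger\|_q^q\, t^{-q}$ in the case $p=0$.

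Next I would exploit minimality. Since $T_{\alpha}^\delta(u_\alpha^\delta) \le T_\alpha^\delta(u^\dagger_t)$ and $A u^\dagger = v$, the upper bound in \eqref{eq:twosided} yields
\begin{align*}
\|A u_\alpha^\delta - v^\delta\|_V^\sigma + \alpha\, \Rp{u_\alpha^\delta}
\le \big(d_2\,\|u^\dagger - u^\dagger_t\|_\myr + \delta\big)^\sigma + \alpha\, \Rp{u^\dagger_t}.
\end{align*}
The decisive step is to couple the threshold to the noise by $t = \delta^{\myr/(\myr - q)}$, which by the first and third estimates makes $\|u^\dagger - u^\dagger_t\|_\myr = \mathcal{O}(\delta)$ and $\Rp{u^\dagger_t} = \mathcal{O}(\delta^{-\mygamb})$. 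Together with the a priori choice \eqref{eq:apriori}, for which $\alpha_\delta\, \delta^{-\mygamb} = \delta^\sigma$, the entire right-hand side collapses to $\mathcal{O}(\delta^\sigma)$. Reading off the two nonnegative summands on the left then gives simultaneously $\|A u_{\alpha_\delta}^\delta - v^\delta\|_V = \mathcal{O}(\delta)$ and the penalty rate $\Rp{u_{\alpha_\delta}^\delta} = \mathcal{O}(\delta^{-\mygamb})$, which is already the second assertion of \eqref{eq:main}.

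For the $\ell_\mytau$ rate I would \emph{not} compare $u_{\alpha_\delta}^\delta$ with $u^\dagger$ directly, since in the oversmoothing case $\Rp{u^\dagger}$ may be infinite and the interpolation inequality is unavailable for that difference. Instead I would split
\begin{align*}
\|u_{\alpha_\delta}^\delta - u^\dagger\|_\mytau
\le \|u_{\alpha_\delta}^\delta - u^\dagger_t\|_\mytau + \|u^\dagger_t - u^\dagger\|_\mytau .
\end{align*}
The tail term is already controlled by the second thresholding estimate, which at $t = \delta^{\myr/(\myr-q)}$ gives $\|u^\dagger_t - u^\dagger\|_\mytau = \mathcal{O}(\delta^{\mygama})$. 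For the first term I would apply the interpolation inequality \eqref{eq:interpol_ineq} to $w := u_{\alpha_\delta}^\delta - u^\dagger_t \in \ell_p$, i.e.\ $\|w\|_\mytau^\mytau \le \Rp{w}^\theta\,\|w\|_\myr^{\myr(1-\theta)}$ with $\theta = (\myr - \mytau)/(\myr - p)$. Here $\Rp{w} = \mathcal{O}(\delta^{-\mygamb})$ follows from the (quasi-)subadditivity of $\Rp{\cdot}$ together with the two bounds $\Rp{u_{\alpha_\delta}^\delta} = \mathcal{O}(\delta^{-\mygamb})$ and $\Rp{u^\dagger_t} = \mathcal{O}(\delta^{-\mygamb})$, while $\|w\|_\myr = \mathcal{O}(\delta)$ follows from the lower bound in \eqref{eq:twosided}, namely
\begin{align*}
d_1\,\|u_{\alpha_\delta}^\delta - u^\dagger\|_\myr \le \|A u_{\alpha_\delta}^\delta - v^\delta\|_V + \delta = \mathcal{O}(\delta),
\end{align*}
combined with the triangle inequality and the first thresholding estimate. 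Substituting these into the interpolation inequality, the resulting exponent $\big(-\mygamb\,\theta + \myr(1-\theta)\big)/\mytau$ simplifies to exactly $\mygama$, so $\|u_{\alpha_\delta}^\delta - u^\dagger_t\|_\mytau = \mathcal{O}(\delta^{\mygama})$ and hence the first assertion of \eqref{eq:main}.

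The main obstacle is precisely the oversmoothing: $u^\dagger$ need not lie in the domain of $\Rp{\cdot}$, so neither the comparison argument nor the interpolation inequality is directly applicable to $u^\dagger$ itself. The hard-thresholded surrogate $u^\dagger_t$ resolves both difficulties at once, but only because the \emph{same} coupling $t = \delta^{\myr/(\myr-q)}$ simultaneously (i) renders the surrogate's data misfit of order $\delta$, (ii) produces the penalty growth $\delta^{-\mygamb}$ matched by the a priori choice, and (iii) makes the thresholding tail of order $\delta^{\mygama}$. The remaining delicate point is the algebraic collapse of the interpolation exponent: one checks that the numerator $-(q-p)(\myr-\mytau) + (\mytau-p)(\myr-q)$ factors as $(\mytau-q)(\myr-p)$, which is exactly what forces the two exponents in \eqref{eq:gamma} to be the correct ones. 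In the limiting case $p=0$ the argument applies verbatim with $\Rp[0]{\cdot}$ in place of $\Rp{\cdot}$, using the $p\to 0$ extension of \eqref{eq:interpol_ineq} and the subadditivity of the counting measure on supports.
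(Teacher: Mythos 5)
Your proposal is correct and follows essentially the same route as the paper's own proof: a hard-thresholded surrogate of $u^\dagger$ as comparison element in the Tikhonov functional, the two-sided estimate \eqref{eq:twosided} to control the $\ell_\myr$-error and the penalty value, and the interpolation inequality \eqref{eq:interpol_ineq} applied to the difference between the minimizer and the surrogate, with the same exponent algebra collapsing to $\mygama$. The only (cosmetic) difference is organizational: the paper couples the threshold to $\alpha$ via $\beta(\alpha)=\alpha^{\myr/N}$ and first proves bounds valid for all $\alpha>0$ (Proposition~\ref{th:tikfun_esti}, Corollary~\ref{th:tikfun_esti_cor}) before inserting the a priori choice, whereas you substitute $\alpha=\alpha_\delta$ from the start and couple the threshold directly to $\delta$ --- which yields the identical threshold $\delta^{\myr/(\myr-q)}$, since $\beta(\alpha_\delta)=\delta^{\myr/(\myr-q)}$.
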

The case $ q < p $ means oversmoothing, and the proof utilizes auxiliaries then. Those auxiliaries are generated by hard thresholding; details are given in the following section. The proof of the theorem is then presented in Section \ref{proof_theorem} below.
We proceed with some important comments on the theorem and an example.
%%
%%\pagebreak
%%
\begin{remark} $\;$
\begin{enumerate}
\item We have linear convergence in (\ref{eq:main}) with $ \mygama = 1 $ for $ q = 0 $ when $u^\dagger$ is sparse, and moreover in the limiting case $ \mytau = \myr $ when the problem is well-posed.

\item The convergence rate $ \mygama $ in (\ref{eq:main}) is independent of $ p $, so the particular choice of the stabilizing functional has no impact on the rate of convergence.

\item The second statement in (\ref{eq:main}) is a conditional stability estimate. It is of particular interest for the case $ p = 0 $ and means then
$ \Vert u_{\alpha_\delta}^\delta \Vert_0 = \mathcal{O}(\delta^{-\frac{q \myr}{\myr-q}}) $ as $ \delta \to 0 $. The latter estimate provides information on the
number of non-vanishing entries of the computed approximations.
\end{enumerate}
\end{remark}
\begin{example}
Consider the special case $ r = 2 $, i.e.~we have $ A: \ell_2 \to V $.
%%the basic space for the error measurement is $ \ell_2 $.
Let $ u^\dagger \in \ell_q $, where $ 1 < q < 2 $ is conjugate to the parameter $ a > 2 $, this is, $ \frac{1}{q} + \frac{1}{a}  = 1 $.
We then have $ \gamma_1 = \frac{1}{2} $, i.e.~Theorem~\ref{th:main} yields
%%% the convergence rate
$ \Vert u_{\alpha_\delta}^\delta - u^\dagger \Vert_2 = \mathcal{O}(\delta^{1/2}) $ as $ \delta \to 0 $. This convergence rate is reasonable, since the range of the adjoint $ A^\myprime: V^\myprime \to \ell_2 $ of the operator $ A $ satisfies $ \mathcal{R}(A^\myprime) = \ell_q $. The latter identity follows easily from the decomposition
$A=B \circ \mathcal{E}_2^\myr: \ell_2 \to V$,
cf.~Section \ref{spec_model}, in addition with the following two facts: the adjoint operator $ B^\myprime: V \to \ell_q $ is onto since the continuation $ B: \ell_a \to V $ of the operator $ A $ has a bounded inverse on its range $ \mathcal{R}(B) $, and moreover the adjoint
$ (\mathcal{E}_2^\myr)^\myprime: \ell_q \to \ell_2 $
of the embedding operator $\mathcal{E}_2^\myr: \ell_2  \to \ell_\myr $
%%%with  $ \frac{1}{s} + \frac{1}{r}  =  1 $
is again an embedding operator. Note that our convergence result is an improvement over \cite[Proposition 11]{Grasmair_Haltmeier_Scherzer[08]}
with respect to the smoothness assumption on the solution $ u^\dagger $.
 Finally, note that our convergence result holds for any penalty functional $ \Rppur $ with $ 0 \le p \le q $.
%jjj
\end{example}
\subsection{Hard thresholding}
Below, we consider hard thresholding, which turns out to be an important tool for  Tikhonov regularization in sequence spaces when oversmoothing or post processing is involved. Let
$ \beta > 0 $ be a threshold level.
For any infinite sequence $ u = (u_n)_{n \ge 1 } $, hard thresholding $ H_\beta(u) $ defines an infinite sequence as follows:
\begin{align}
\label{eq:hard_thresh}
H_\beta(u)_n = \left\{ \begin{array}{rl} u_n & \textup{if } \vert u_n \vert \ge \beta, \\
0 & \textup{otherwise},
\end{array} \right. \quad n = 1,2,\ldots \ .
\end{align}
%
%%The mapping $ H_\alpha $ is nonlinear and satisfies
%%\Vert  H_\alpha(u) \Vert_\mytau^\mytau \le \alpha^{\mytau-q} \Vert u \Vert_
%
Hard thresholding satisfies an approximation property as well as an inverse property:
\begin{proposition}
\label{th:hardthreshold}
For any $ 0 \le  p \le q \le \tau $ with $ \tau \ge 1 $, we have
\begin{align*}
\Vert H_\beta(u)- u \Vert_\tau^\tau
\le \Rp[q]{u} \beta^{\tau-q},
\quad
\Rp{H_\beta(u)} \le \Rp[q]{u} \beta^{-(q-p)},
\quad \beta > 0, \  u \in \ell_q.
\end{align*}
\end{proposition}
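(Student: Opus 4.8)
The plan is to argue entirely index by index, splitting $\mathbb{N}$ according to the threshold $\beta$. I would set $S = \{\, n : |u_n| < \beta \,\}$ for the indices discarded by the thresholding and $L = \{\, n : |u_n| \ge \beta \,\}$ for those retained. By the definition \eqref{eq:hard_thresh}, the sequence $H_\beta(u)$ agrees with $u$ on $L$ and vanishes on $S$, so the residual $H_\beta(u) - u$ vanishes on $L$ and equals $-u$ on $S$. Each of the two claimed inequalities then concerns a sum over exactly one of these index sets, and reduces to a pointwise estimate obtained from the monotonicity of $t \mapsto t^s$ together with the sign of the relevant exponent.

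For the approximation property, assuming first $q,\tau > 0$, I would write
\[
\Vert H_\beta(u) - u \Vert_\tau^\tau = \sum_{n \in S} |u_n|^\tau = \sum_{n \in S} |u_n|^q \, |u_n|^{\tau-q} \le \beta^{\tau-q} \sum_{n \in S} |u_n|^q \le \beta^{\tau-q}\,\Rp[q]{u},
\]
where the decisive middle inequality uses $\tau - q \ge 0$ (guaranteed by $q \le \tau$) together with $|u_n| < \beta$ on $S$, and the final step enlarges the summation from $S$ to all of $\mathbb{N}$.

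For the inverse property, assuming first $p,q > 0$, the analogous computation on the retained set reads
\[
\Rp{H_\beta(u)} = \sum_{n \in L} |u_n|^p = \sum_{n \in L} |u_n|^q \, |u_n|^{-(q-p)} \le \beta^{-(q-p)} \sum_{n \in L} |u_n|^q \le \beta^{-(q-p)}\,\Rp[q]{u},
\]
this time invoking $q - p \ge 0$ (from $p \le q$) and $|u_n| \ge \beta$ on $L$, so that $|u_n|^{-(q-p)} \le \beta^{-(q-p)}$.

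The one place that needs separate bookkeeping — and which I regard as the only genuine, if minor, obstacle — is the appearance of the counting functional $\Rp[0]{\cdot} = \Vert \cdot \Vert_0$ in the boundary cases $q = 0$ or $p = 0$, where the factorizations above degenerate. For $q = 0$ the approximation bound becomes $\Vert H_\beta(u) - u \Vert_\tau^\tau \le \Vert u \Vert_0\,\beta^\tau$, which I would obtain directly by noting that every nonzero term in $\sum_{n \in S} |u_n|^\tau$ is strictly less than $\beta^\tau$ while the number of such terms is at most $\Vert u \Vert_0$. For $p = 0$ (with $q > 0$) the inverse bound becomes $\Vert H_\beta(u) \Vert_0 = \# L \le \beta^{-q}\,\Rp[q]{u}$, which follows from $1 \le (|u_n|/\beta)^q$ on $L$; and when $p = q = 0$ it reduces to the trivial $\# L \le \Vert u \Vert_0$. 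Alternatively, all of these limiting identities can be recovered by letting $q \to 0$, respectively $p \to 0$, in the strictly positive estimates above, exactly as the interpolation inequality \eqref{eq:interpol_ineq} was extended to $p = 0$ in the introduction. Apart from this case analysis, the argument is a routine two-line summation and I do not anticipate any further difficulty.
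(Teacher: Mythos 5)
Your proof is correct and follows essentially the same route as the paper's: split the index set at the threshold $\beta$, bound $|u_n|^{\tau-q}$ by $\beta^{\tau-q}$ on the discarded indices and $|u_n|^{-(q-p)}$ by $\beta^{-(q-p)}$ on the retained ones, then enlarge the sum to all of $\mathbb{N}$. The paper simply remarks that the argument ``applies also to the cases $p=0$ and $q=0$,'' whereas you spell out those boundary cases explicitly --- a harmless (and slightly more careful) elaboration, not a different approach.
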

\begin{proof}
Both estimates are easily obtained:
\begin{align*}
\Vert H_\beta(u)- u \Vert_\tau^\tau
& = \sum_{\vert u_n \vert < \beta} \vert u_n \vert^\tau
\le \Big(\sum_{\vert u_n \vert < \beta} \vert u_n \vert^q\Big) \beta^{\tau-q}
\le \Rp[q]{u} \beta^{\tau-q}, \\
\Rp{H_\beta(u)} &= \sum_{\vert u_n \vert \ge \beta} \vert u_n \vert^p
\le \Big(\sum_{\vert u_n \vert \ge \beta} \vert u_n \vert^q\Big) \beta^{-(q-p)}
\le \Rp[q]{u} \beta^{-(q-p)}.
\end{align*}
Note that the proof applies also to the cases $ p = 0 $ and $ q = 0 $.
\end{proof}
%
%%\textup{Reference required.}
%
The two estimates of the proposition may be considered as Jackson- and Bernstein-type inequality, respectively. Similar estimates are considered in \cite[Lemmas 9 and 12]{Miller_Hohage[21]}, \cite[Section 5.1]{Cohen_Dahmen_DeVore[01]} and \cite[Section 7.8]{DeVore[98]}.
\subsection{The basic steps}
\label{basic_steps}
%%\subsection{Proof of Theorem \ref{th:main}}
%%\label{proof_theorem}
%
In what follows, we shall make frequent use of auxiliary elements, i.e.
\begin{align}
\label{eq:aux}
\uhat_\alpha =  H_{\beta(\alpha)} (u^\dagger), \quad \textup{with }
\beta = \beta(\alpha) := \alpha^{\myr/N}, \quad N := (\myr-q)\sigma + (q-p)\myr,
\end{align}
where the parameter $ \alpha $ corresponds to
variational regularization considered in (\ref{eq:Tik}).
As an immediate consequence of Proposition \ref{th:hardthreshold},
we have the following approximation and stability property,
respectively:
\begin{align}
\label{eq:uhat-estis}
\Vert \uhat_\alpha  - u^\dagger \Vert_\tau^\tau \le \alpha^{(\tau-q)\myr/N} \Rp[q]{u^\dagger},
\quad
\Rp{\uhat_\alpha} \le \alpha^{\kappa-1} \Rp[q]{u^\dagger}
\quad (\alpha > 0),
%%\ \tau > q),
%%% \  u \in \ell_q.
\end{align}
where
$ 0 \le  p \le q \le \tau $ with $ \tau \ge 1 $, and
the exponent $ 0 < \kappa \le 1 $ is given by
\begin{align}
\label{eq:kappa_def}
\kappa := \frac{\myr -q}{N} \sigma,
\end{align}
with $ N $ taken from (\ref{eq:aux}).
The first estimate in (\ref{eq:uhat-estis}) is applied below both for $ \tau = \myr $ and
$ \tau = r $.

The following proposition provides an upper bound for the minimum of the Tikhonov functional and turns out as a basic ingredient in our analysis.
\begin{proposition}
\label{th:tikfun_esti}
We have
\begin{align}
\label{eq:tikfun_esti}
T_{\alpha}^\delta(u_{\alpha}^\delta) \le c (\alpha^\kappa + \delta^\sigma),
\quad \alpha>0,
\end{align}
where $ c > 0 $ denotes a finite constant that
may depend on the \searchedsolb $ u^\dagger $
but is independent both of $ \alpha $ and $ \delta $.
\end{proposition}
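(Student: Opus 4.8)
The plan is to exploit the defining minimality of $u_\alpha^\delta$: since $u_\alpha^\delta$ minimizes $T_\alpha^\delta$ over $\ell_\mytau$, and the functional takes finite values exactly on $\ell_p$, every admissible competitor $w \in \ell_p$ satisfies $T_\alpha^\delta(u_\alpha^\delta) \le T_\alpha^\delta(w)$. I would test this inequality with the auxiliary element $w = \uhat_\alpha = H_{\beta(\alpha)}(u^\dagger)$ from (\ref{eq:aux}). A first point to record is that $\uhat_\alpha$ is an admissible competitor: because $u^\dagger \in \ell_q$ its entries tend to zero, so only finitely many exceed the threshold $\beta(\alpha)$, whence $\uhat_\alpha$ is finitely supported and lies in $\ell_0 \subset \ell_p$; in particular $T_\alpha^\delta(\uhat_\alpha) < \infty$. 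It then remains to bound the two summands of $T_\alpha^\delta(\uhat_\alpha)$ separately.

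For the penalty term the bound is immediate from the second estimate in (\ref{eq:uhat-estis}): $\alpha \Rp{\uhat_\alpha} \le \alpha \cdot \alpha^{\kappa-1}\Rp[q]{u^\dagger} = \alpha^\kappa \Rp[q]{u^\dagger}$. For the misfit term I would split off the noise using $A u^\dagger = v$ together with (\ref{eq:vdelta}), namely $\|A\uhat_\alpha - v^\delta\|_V \le \|A(\uhat_\alpha - u^\dagger)\|_V + \|v - v^\delta\|_V \le d_2\,\|\uhat_\alpha - u^\dagger\|_\myr + \delta$, where the right-hand inequality of the two-sided estimate (\ref{eq:twosided}) is applied to the extension of $A$ to $\ell_\myr$ (legitimate since $\uhat_\alpha - u^\dagger \in \ell_q \subset \ell_\myr$). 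Inserting the first estimate in (\ref{eq:uhat-estis}) with $\tau = \myr$ and taking the $\myr$-th root gives $\|\uhat_\alpha - u^\dagger\|_\myr \le \Rp[q]{u^\dagger}^{1/\myr}\,\alpha^{(\myr-q)/N}$. Raising the resulting bound on $\|A\uhat_\alpha - v^\delta\|_V$ to the power $\sigma$ and using the elementary inequality $(x+y)^\sigma \le c_\sigma(x^\sigma + y^\sigma)$ with $c_\sigma = \max\{1, 2^{\sigma-1}\}$ then yields a term proportional to $\alpha^{(\myr-q)\sigma/N}$ plus a term $c_\sigma\,\delta^\sigma$.

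The decisive observation is the exponent identity $(\myr-q)\sigma/N = \kappa$, which holds by the very definitions of $N$ in (\ref{eq:aux}) and $\kappa$ in (\ref{eq:kappa_def}); this makes the misfit contribution scale exactly like $\alpha^\kappa$, matching the penalty contribution. Collecting the summands gives $T_\alpha^\delta(u_\alpha^\delta) \le T_\alpha^\delta(\uhat_\alpha) \le c(\alpha^\kappa + \delta^\sigma)$, where $c$ is assembled from $c_\sigma$, $d_2$ and $\Rp[q]{u^\dagger} = \|u^\dagger\|_q^q$, hence depends on $u^\dagger$ only through $\|u^\dagger\|_q$ and is independent of $\alpha$ and $\delta$, as claimed. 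I do not expect a genuine obstacle: the argument is a calibrated comparison, and the only points requiring care are the admissibility of $\uhat_\alpha$, the correct use of the $\ell_\myr$-extension of $A$ in (\ref{eq:twosided}), and the bookkeeping of the power-$\sigma$ constant in the two regimes $0 < \sigma < 1$ and $\sigma \ge 1$. The real content is already packaged in the hard-thresholding estimates (\ref{eq:uhat-estis}) and the exponent identity above.
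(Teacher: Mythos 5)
Your proposal is correct and follows essentially the same route as the paper's own proof: compare $T_\alpha^\delta(u_\alpha^\delta)$ with $T_\alpha^\delta(\uhat_\alpha)$ for the hard-thresholded auxiliary $\uhat_\alpha$ from (\ref{eq:aux}), split the misfit via the triangle inequality and the right-hand estimate in (\ref{eq:twosided}), and invoke both bounds in (\ref{eq:uhat-estis}) (the first with $\tau=\myr$), noting that all exponents collapse to $\alpha^\kappa$. Your additional remarks on the admissibility of $\uhat_\alpha$ and the explicit power-$\sigma$ constant only make explicit what the paper absorbs into its generic constants $e_1,e_2,e_3$.
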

\begin{proof} \textup{
Utilizing auxiliaries from (\ref{eq:aux}) with $ \beta = \beta(\alpha) $, we have
\begin{align*}
T_{\alpha}^\delta(u_{\alpha}^\delta) & \le T_{\alpha}^\delta(\uhat_\alpha) =
\Vert A \uhat_\alpha - v^\delta \Vert_V^\sigma + \alpha \Rp{\uhat_\alpha}
\\
& \le
e_1(\Vert A (\uhat_\alpha - u^\dagger) \Vert_V^\sigma + \delta^\sigma) + \alpha \Rp{ \uhat_\alpha }
\\
& \le e_2(\Vert \uhat_\alpha - u^\dagger \Vert_\myr^\sigma + \delta^\sigma) + \alpha \Rp{\uhat_\alpha} \\
%%& \le c_3(\beta^{(\myr-q)\sigma/\myr} + \delta^\sigma + \alpha \beta^{-(q-p)}),
& \le e_3(\alpha^\kappa + \delta^\sigma + \alpha \alpha^{\kappa-1}),
\end{align*}
where the minimization property of $ u_{\alpha}^\delta $ and the
second estimate in (\ref{eq:twosided}) have been applied.
In addition,
both estimates in (\ref{eq:uhat-estis})
%%%the first estimate of Proposition \ref{th:hardthreshold}
have been utilized,
the former with $ \tau = \myr $ in fact.
Moreover, $ e_1, e_2, $ and $ e_3 $ denote finite positive constants
which are independent of both $ \alpha $ and $ \delta $, respectively, and
$ e_3 $ depends on the \searchedsol $ u^\dagger $.
%%% Now, the specific choice of $ \beta = \beta(\alpha) $ considered in
%%(\ref{eq:aux}) balances the first and third term in the latter estimate, with
%%$ \beta^{(\myr-q)\sigma/\myr} = \alpha \beta^{-(q-p)} = \alpha^\kappa $.
This completes the proof of the lemma.
}
\end{proof}
As an immediate consequence of the preceding proposition, we are able to provide estimates both for
the approximation error in the weaker norm $ \Vert \cdot \Vert_\myr $ and the penalty functional:
\begin{corollary}
\label{th:tikfun_esti_cor}
We have
\begin{align*}
\Vert u_\alpha^\delta - u^\dagger \Vert_\myr
\le
c_1 (\alpha^{(\myr-q)/N} + \delta),
\quad
\Rp{u_\alpha^\delta} \le
c_2 \Big(\alpha^{\kappa-1} + \frac{\delta^\sigma}{\alpha}\Big), \qquad \alpha > 0,
\end{align*}
where $ c_1 $ and $ c_2 $ denote positive constants that are independent both of $ \alpha $ and $ \delta $, respectively.
\end{corollary}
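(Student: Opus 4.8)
The plan is to read off both inequalities directly from Proposition~\ref{th:tikfun_esti}, exploiting that the two summands of the Tikhonov functional $T_\alpha^\delta(u_\alpha^\delta)=\|A u_\alpha^\delta-v^\delta\|_V^\sigma+\alpha\,\Rp{u_\alpha^\delta}$ are both non-negative. Hence \eqref{eq:tikfun_esti} splits into the two separate estimates
\begin{align*}
\|A u_\alpha^\delta-v^\delta\|_V^\sigma \le c\,(\alpha^\kappa+\delta^\sigma),
\qquad
\alpha\,\Rp{u_\alpha^\delta}\le c\,(\alpha^\kappa+\delta^\sigma),\qquad \alpha>0.
\end{align*}
The second of these, divided by $\alpha$, is already the claimed penalty bound with $c_2=c$, since $\tfrac{c}{\alpha}(\alpha^\kappa+\delta^\sigma)=c\,(\alpha^{\kappa-1}+\delta^\sigma/\alpha)$.

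For the error bound I would first note that $u_\alpha^\delta-u^\dagger\in\ell_\mytau$, because $u_\alpha^\delta\in\ell_p\subset\ell_\mytau$ and $u^\dagger\in\ell_q\subset\ell_\mytau$, so the left inequality in \eqref{eq:twosided} applies to this difference. Combining it with the triangle inequality and with $A u^\dagger=v$, $\|v^\delta-v\|_V\le\delta$, gives
\begin{align*}
d_1\,\|u_\alpha^\delta-u^\dagger\|_\myr
\le \|A(u_\alpha^\delta-u^\dagger)\|_V
\le \|A u_\alpha^\delta-v^\delta\|_V+\|v^\delta-v\|_V
\le \|A u_\alpha^\delta-v^\delta\|_V+\delta.
\end{align*}

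It then remains to convert the misfit bound above into a bound on $\|A u_\alpha^\delta-v^\delta\|_V$ itself. Taking $\sigma$-th roots yields $\|A u_\alpha^\delta-v^\delta\|_V\le c^{1/\sigma}(\alpha^\kappa+\delta^\sigma)^{1/\sigma}$, and an elementary inequality of the form $(s+t)^{1/\sigma}\le C_\sigma\,(s^{1/\sigma}+t^{1/\sigma})$, valid for all $s,t\ge 0$ with $C_\sigma$ depending on whether $\sigma\ge 1$ or $\sigma<1$, together with $(\delta^\sigma)^{1/\sigma}=\delta$ and the identity $\kappa/\sigma=(\myr-q)/N$ read off from \eqref{eq:kappa_def}, reduces this to $\|A u_\alpha^\delta-v^\delta\|_V\le C\,(\alpha^{(\myr-q)/N}+\delta)$. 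Substituting into the displayed inequality, dividing by $d_1$, and absorbing the extra $\delta$ into the bracket produces the first estimate with a suitable $c_1$. The only step that is not pure bookkeeping is this passage through the $\sigma$-th root; everything else is the triangle inequality and the two halves of \eqref{eq:twosided}. I therefore expect the exponent identity $\kappa/\sigma=(\myr-q)/N$ and the case distinction in the choice of $C_\sigma$ to be the sole points requiring care, which is exactly why the statement can be labelled an immediate consequence of the preceding proposition.
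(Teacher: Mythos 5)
Your proposal is correct and follows essentially the same route as the paper: the penalty bound is read off from Proposition~\ref{th:tikfun_esti} by dropping the misfit term and dividing by $\alpha$, and the error bound comes from the left inequality in \eqref{eq:twosided} plus the triangle inequality, with the misfit term controlled via the $\sigma$-th root of \eqref{eq:tikfun_esti} and the identity $\kappa/\sigma=(\myr-q)/N$. You merely make explicit two details the paper leaves implicit (the subadditivity-type inequality $(s+t)^{1/\sigma}\le C_\sigma(s^{1/\sigma}+t^{1/\sigma})$ and the fact that $u_\alpha^\delta-u^\dagger\in\ell_\mytau$), which is fine.
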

\begin{proof} \textup{
The second estimate of the corollary is immediate from Proposition \ref{th:tikfun_esti},
and the first follows from the first estimate in (\ref{eq:twosided})
and Proposition \ref{th:tikfun_esti}:
\begin{align*}
d_1 \Vert u_\alpha^\delta - u^\dagger \Vert_\myr
\le
\Vert A ( u_\alpha^\delta - u^\dagger) \Vert_V
\le
\Vert A u_\alpha^\delta - \fdelta \Vert_V + \delta
\le e ( \alpha^{\kappa/\sigma} + \delta), \quad \alpha > 0,
\end{align*}
where $ e > 0 $ denotes a finite constant that may depend on the \searchedsol $ u^\dagger $ but is independent of $ \alpha $ and $ \delta $.
}
\end{proof}
\subsection{Proof of Theorem \ref{th:main}}
\label{proof_theorem}
We have already derived estimates of $ \Vert v - u^\dagger \Vert_\myr $ and
$ \Rp{v} $ for both the Tikhonov minimizers $ v =  u_\alpha^\delta $ and the auxiliaries
$ v = \uhat_\alpha $. The proof can now be completed by simple synthesis and by utilizing the interpolation inequality for the scale of standard sequences spaces. We start by a natural decomposition of the approximation error using the auxiliaries:
\begin{align}
\label{eq:proof_thm_1}
\Vert u_\alpha^\delta - u^\dagger \Vert_r
\le \Vert u_\alpha^\delta - \uhat_\alpha \Vert_r
+ \Vert \uhat_\alpha  - u^\dagger \Vert_r.
\end{align}
The first term on the right-hand side in (\ref{eq:proof_thm_1}) can be estimated by
\begin{align}
\label{eq:proof_thm_2}
\Vert \uhat_\alpha  - u^\dagger \Vert_\mytau^\mytau \le e_1 \alpha^{(\mytau-q)\myr/N}, \quad
\alpha > 0,
\end{align}
which follows from the first estimate in (\ref{eq:uhat-estis}) applied with $ \tau = r $.
Here and in what follows, $ e_1, e_2, \ldots $ denote finite positive constants that may depend on the \searchedsolb $ u^\dagger $ but are independent of $ \alpha $ and $ \delta $, respectively.
Below, we estimate the first term on the right-hand side
of (\ref{eq:proof_thm_1}) by utilizing the interpolation inequality in sequence spaces, cf.~(\ref{eq:interpol_ineq}):
%%which in fact follows easily by the H\"older inequality:
%%(reference !!)
%
\begin{align}
\label{eq:interpol_ineq_b}
\Vert u_\alpha^\delta - \uhat_\alpha \Vert_\mytau^\mytau
\le
\Rp{u_\alpha^\delta - \uhat_\alpha}^\theta
\cdot
\Vert u_\alpha^\delta - \uhat_\alpha \Vert_\myr^{\myr(1-\theta)},
\end{align}
with
\begin{align*}
\theta = \frac{\myr-r}{\myr-p}, \quad 1-\theta = \frac{r-p}{\myr-p}.
\end{align*}
The first term on the right-hand side of (\ref{eq:interpol_ineq_b}) can be estimated by means of the conditional stability estimate in (\ref{eq:uhat-estis}) and Corollary
\ref{th:tikfun_esti_cor}:
\begin{align}
\label{eq:proof_thm_3}
\Rp{u_\alpha^\delta - \uhat_\alpha}
\le e_2
(\Rp{u_\alpha^\delta}  + \Rp{\uhat_\alpha})
\le e_3
\Big(\alpha^{\kappa-1} + \frac{\delta^\sigma}{\alpha}\Big), \quad \alpha > 0.
\end{align}
The second term on the right-hand side of (\ref{eq:interpol_ineq_b}) can also be treated by means of the estimates in (\ref{eq:uhat-estis}) and Corollary
\ref{th:tikfun_esti_cor}:
%
%%%\fix{In letzem Exponenten einen Faktor $a$ ergänzt.}%
%%%\begin{align}
%%%\label{eq:proof_thm_4}
%%%\Vert u_\alpha^\delta - \uhat_\alpha \Vert_\myr^\myr
%%%\le
%%%e_4( \Vert u_\alpha^\delta - u^\dagger \Vert_\myr^\myr
%%%+ \Vert \uhat_\alpha  - u^\dagger \Vert_\myr^\myr)
%%\le
%%%e_5 (\alpha^{(\myr-q)\myr/N} + \delta), \quad \alpha > 0.
%%%\end{align}
\begin{align}
\label{eq:proof_thm_4}
\Vert u_\alpha^\delta - \uhat_\alpha \Vert_\myr
\le
\Vert u_\alpha^\delta - u^\dagger \Vert_\myr
+ \Vert \uhat_\alpha  - u^\dagger \Vert_\myr
\le
e_4 (\alpha^{(\myr-q)/N} + \delta), \quad \alpha > 0.
\end{align}
We now consider the above estimates for the specific a priori parameter choice
$ \alpha=\alpha_\delta $ introduced in (\ref{eq:apriori}).
In fact, this parameter choice
balances the two terms of the upper bounds in
(\ref{eq:proof_thm_3}) and
(\ref{eq:proof_thm_4}), respectively. This yields
the two estimates
\begin{align}
\label{eq:proof_thm_5}
\Rp{ u_{\alpha_\delta}^\delta - \uhat_{\alpha_\delta} }
\le
%%e_6 \delta^{-\frac{(q-p)\myr}{\myr -q}},
e_5 \delta^{-\mygamb},
\quad \Vert u_{\alpha_\delta}^\delta - \uhat_{\alpha_\delta} \Vert_\myr
\le e_6 \delta,
\end{align}
and in addition we obtain the conditional stability estimate of the theorem, cf.~the second estimate in (\ref{eq:proof_thm_3}). Utilizing in (\ref{eq:interpol_ineq_b})
the two estimates from (\ref{eq:proof_thm_5}) finally gives
%
%%\fix{In ausgestellter Formel Definition von $\gamma_1 $ entfernt.}%
\begin{align*}
\Vert u_{\alpha_\delta}^\delta -  \uhat_{\alpha_\delta}  \Vert_\mytau \le
e_7\delta^{\mygama}.
%%\qquad \mygama = \frac{\myr}{\mytau} \cdot \frac{\mytau - q}{\myr-q}.
\end{align*}
The tedious but simple computations are left to the reader.
For the parameter choice (\ref{eq:apriori}), in addition we also have
$ \Vert \uhat_{\alpha_\delta} - u^\dagger  \Vert_\mytau \le
e_8\delta^{\mygama} $, cf.~(\ref{eq:proof_thm_2}).
This completes the proof of Theorem~\ref{th:main}.
%
%%\subsection{Some comments, post processing}
%
\begin{remark}
In the case $ q = p $, i.e.~$ \Rppur $ is the stabilizing functional and $ u^\dagger \in \ell_p $, there is no oversmoothing. The proof technique can be simplified in that case, and auxiliaries are not needed then in fact.
%%% In addition, only the lower Lipschitz condition is needed then.
\end{remark}
\subsection{Post processing}
Under the conditions of Theorem \ref{th:main}, we can guarantee sparsity only for
$ p = 0 $, i.e.~the penalty functional is $ \Rppur[0] $. In all other cases, sparsity can easily be obtained by some simple post processing based on hard thresholding.
For this purpose, consider
\begin{align}
\label{eq:post_proc}
v_\beta^\delta = H_\beta(u_{\alpha_\delta}^\delta), \quad \beta > 0,
\end{align}
where $ H_\beta $ refers to hard thresholding given by (\ref{eq:hard_thresh}).
For an appropriate choice of $ \beta = \beta_\delta $, the element $ v_{\beta_\delta}^\delta $ allows the same error rates as the original regularizing element
$ u_{\alpha_\delta}^\delta $, but this time with guaranteed sparsity for any $ p $ and with available conditional stability estimate, as the following corollary shows.
%%vvvv
%%%As a further preparation, we introduce the following a priori parameter choice,
%
%%\begin{align}
%%\label{eq:betadel}
%%\beta_\delta = \delta^{\frac{\myr}{\myr-q}}, \quad \delta > 0,
%%\end{align}
%
%%which turns out to be suitable for our setting.
\begin{corollary}[Post processing]
\label{th:postproc}
Let the conditions of Theorem \ref{th:main} hold, and in addition let
$ v_\beta^\delta $ be as in
(\ref{eq:post_proc}) and
$ \beta = \beta_\delta = \delta^{\frac{\myr}{\myr-q}}, \, \delta > 0 $.
Then we have
%
%hhhhh
%
\begin{align}
\label{eq:postproc}
\Vert v_{\beta_\delta}^\delta - u^\dagger \Vert_\mytau &=
\mathcal{O}(\delta^{\mygama}),
%%% \  \textup{ with } \gamma  \  \textup{ as above },
\quad
\Rp{ v_{\beta_\delta}^\delta } =
\mathcal{O}(\delta^{-\mygamb})
\quad \textup{ as } \delta \to 0,
\end{align}
where $ \mygama, \mygamb $ are given by (\ref{eq:gamma}).
\end{corollary}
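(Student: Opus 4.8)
The plan is to compare the post-processed element $v_{\beta_\delta}^\delta = H_{\beta_\delta}(u_{\alpha_\delta}^\delta)$ with the Tikhonov minimizer $u_{\alpha_\delta}^\delta$, for which Theorem~\ref{th:main} already supplies both the error rate in $\Vert\cdot\Vert_\mytau$ and the conditional stability bound on the penalty, and then to show that hard thresholding with the prescribed level $\beta_\delta = \delta^{\myr/(\myr-q)}$ degrades neither rate. For the error I would split by the triangle inequality
\[
\Vert v_{\beta_\delta}^\delta - u^\dagger\Vert_\mytau \le \Vert v_{\beta_\delta}^\delta - u_{\alpha_\delta}^\delta\Vert_\mytau + \Vert u_{\alpha_\delta}^\delta - u^\dagger\Vert_\mytau,
\]
where the second summand is $\mathcal{O}(\delta^{\mygama})$ directly by Theorem~\ref{th:main}.

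For the first summand the key step is the approximation estimate of Proposition~\ref{th:hardthreshold}, applied with outer index $\mytau$ and with the inner index chosen equal to the penalty exponent $p$ (legitimate since $p \le \mytau$ and $\mytau \ge 1$), giving
\[
\Vert v_{\beta_\delta}^\delta - u_{\alpha_\delta}^\delta\Vert_\mytau^\mytau = \Vert H_{\beta_\delta}(u_{\alpha_\delta}^\delta) - u_{\alpha_\delta}^\delta\Vert_\mytau^\mytau \le \Rp{u_{\alpha_\delta}^\delta}\,\beta_\delta^{\mytau - p}.
\]
I would then insert the bound $\Rp{u_{\alpha_\delta}^\delta} = \mathcal{O}(\delta^{-\mygamb})$ from Theorem~\ref{th:main} together with $\beta_\delta = \delta^{\myr/(\myr-q)}$ and verify that the exponent of $\delta$ collapses: $-\mygamb + \frac{\myr}{\myr-q}(\mytau - p) = \frac{\myr(\mytau - q)}{\myr - q}$, so that after taking the $\mytau$-th root the first summand is exactly $\mathcal{O}(\delta^{\mygama})$. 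This bookkeeping is really the only thing to check, and it is precisely this balance that dictates the choice of $\beta_\delta$ in the statement.

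For the penalty bound I would use that $\Rp{\cdot}$ is monotone under hard thresholding: setting entries to zero can only decrease $\Vert\cdot\Vert_p^p$ (and the counting measure for $p=0$), which is just the $q=p$ instance of the inverse estimate in Proposition~\ref{th:hardthreshold}. Hence $\Rp{v_{\beta_\delta}^\delta} \le \Rp{u_{\alpha_\delta}^\delta} = \mathcal{O}(\delta^{-\mygamb})$ by Theorem~\ref{th:main}. Finally I would record that, since $u_{\alpha_\delta}^\delta \in \ell_p$ forces only finitely many entries to exceed any positive threshold, $v_{\beta_\delta}^\delta$ has finite support and is therefore sparse for every admissible $p$, which is the point of the post processing.

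I expect no genuine obstacle: the whole argument is a synthesis of Theorem~\ref{th:main} and Proposition~\ref{th:hardthreshold}. The only delicate point is that the conditional stability bound furnished by Theorem~\ref{th:main} is expressed through the $p$-penalty $\Rp{u_{\alpha_\delta}^\delta}$ rather than through a $q$-index quantity, so one must feed this $p$-index bound into the thresholding inequality (taking inner index $p$) and then confirm that the resulting exponent still reduces to $\mygama$ after the root is taken.
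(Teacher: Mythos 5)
Your proposal is correct and follows essentially the same route as the paper's own proof: a triangle-inequality split through $u_{\alpha_\delta}^\delta$, the approximation estimate of Proposition~\ref{th:hardthreshold} with $\tau=\mytau$ and inner index $q=p$ combined with the penalty bound $\Rp{u_{\alpha_\delta}^\delta}=\mathcal{O}(\delta^{-\mygamb})$ from Theorem~\ref{th:main}, and the monotonicity of $\Rp{\cdot}$ under hard thresholding for the second claim, with the same exponent bookkeeping $-\mygamb+\frac{\myr(\mytau-p)}{\myr-q}=\mygama\mytau$. Your closing observation that $v_{\beta_\delta}^\delta$ automatically has finite support is a pleasant extra not spelled out in the paper's proof, but otherwise the two arguments coincide.
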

\begin{proof}
We first consider the left estimate in (\ref{eq:postproc}). Due to the first estimate in Theorem~\ref{th:main}, it is sufficient to provide a suitable estimate for $ \Vert v_{\beta_\delta}^\delta - u_{\alpha_\delta}^\delta \Vert_\mytau $. For this, we make use of the first estimate in Proposition~\ref{th:hardthreshold}, applied with $ u = u_{\alpha_\delta}^\delta, \tau = \mytau $, and $ q = p $. In addition, we utilize the second estimate in Theorem \ref{th:main}:
\begin{align*}
\Vert v_{\beta_\delta}^\delta - u_{\alpha_\delta}^\delta \Vert_\mytau^\mytau
& \le
\beta_\delta^{\mytau- p} \Rp{u_{\alpha_\delta}^\delta } =
\delta^{\frac{\mytau- p}{\myr-q}\myr} \mathcal{O}(\delta^{-\mygamb}) =
\mathcal{O}(\delta^{\mygama \mytau}).
\end{align*}
%
%%% tttt
Next we consider the second estimate in (\ref{eq:postproc}). It immediately follows from
the second estimate in Theorem~\ref{th:main} and the second estimate in Proposition~\ref{th:hardthreshold}, applied with
$ q = p $:
\begin{align*}
\Rp{ v_{\beta_\delta}^\delta } =
\Rp{H_{\beta_\delta}(u_{\alpha_\delta}^\delta)}
\le \Rp{u_{\alpha_\delta}^\delta} = \mathcal{O}(\delta^{-\mygamb}),
 \quad \delta > 0.
\end{align*}
This completes the proof.
\end{proof}
\section{Conclusions and outlook}
\label{conclusions_outlook}
In this paper, we consider variational regularization of linear ill-posed problems in sequence spaces. Our focus lies both on well-posedness of the considered Tikhonov functional and convergence rates for suitable a priori parameter choices.
The considerations are restricted to operators that satisfy a stability estimate with respect to a weaker sequence space. On the other hand, we allow an oversmoothing of the penalty functional. Sparsity results are obtained either when the $ \ell_0 $-norm is utilized as penalty functional or by post-processing based on hard thresholding.

%%% Note that xxx are not included.

In what follows, we list some topics that are beyond the scope of this paper but could be the subject of further investigations.
%%%In this paper we have investigated a priori parameter choices;
First, the analysis of a posteriori parameter choice strategies
 like the discrepancy principle will be of interest. Note that those strategies are superior to a priori parameter choice strategies, since they do not require any knowledge of the smoothness of the solution.
\bh{Another topic of strong interest} will be the investigation of problems with solutions satisfying no additional smoothness conditions, i.e.~they belong to the approximation error space $ \ell_\mytau $ but not to any stronger sequence space $ \ell_q, \, q < \mytau $.
A discussion of the optimality of the presented rates \bh{can certainly play some role}.
Moreover, it seems that the mathematical approach presented in this paper can also be used for the treatment of nonlinear problems.
\bh{The analysis of sparsity-promoting features of
variational regularization with penalty functionals that are related
to an $ \ell_p $-norm, with $ 0 < p \le 1 $ may also be exciting}.
%%In the non-oversmoothing case, this has investigated broadly, see, e.g. \cite{DDM[04]}.

Last but not least: extensions of the presented result to weighted and weak $ \ell_p $ spaces seem to be possible. The former case may allow the coverage of other applications, like period integral operators in spaces of periodic Sobolev spaces.

%
%%%\begin{itemize}
%%\item Extensions of the presented result to weighted and weak $ \ell_p $ spaces seem to be possible and desirable.
%%\item A posteriori parameter choice strategies,
%%\item Examples,
%%\item Nonlinear problems,
%%\item Well-posedness of Tikhonov regularization
%%%\item Discuss optimality of rates.
%%\item Discuss a priori parameter choices under missing smoothness.
%%\end{itemize}
\bibliography{PH25}
\end{document}